\documentclass[12pt,reqno]{amsart}
\usepackage{geometry}
\usepackage[latin1]{inputenc}
\usepackage[italian,english]{babel}
\usepackage{amsmath, amsfonts, amsthm, xcolor}
\usepackage{paralist}
\numberwithin{equation}{section}
\geometry{a4paper}
\usepackage{mathtools, mathabx, verbatim}


\newtheorem{thm}{Theorem}[section]
\newtheorem{lem}[thm]{Lemma}

\newtheorem{prop}[thm]{Proposition}

\newtheorem{defn}[thm]{Definition}
\theoremstyle{definition}
\newtheorem{rem}[thm]{Remark}
\theoremstyle{remark}

\newtheorem{cntex}[thm]{Counterexample}

\newcommand{\ds}{\displaystyle}

\newcommand{\abs}[1]{\left\vert#1\right\vert}

\newcommand{\R}{\mathbb{R}}
\newcommand{\N}{\mathbb{N}}

\newcommand{\de}{\partial}

\DeclareMathOperator{\diam}{diam}
{\left\{\begin{array}{@{}l@{}}}{\end{array}\right.}
\patchcmd{\abstract}{\scshape\abstractname}{\textbf{\abstractname}}{}{}
\makeatletter 
\def\@makefnmark{} 
\makeatother 

\title[On the first Steklov-Robin eigenvalue ]{ On a Steklov-Robin eigenvalue problem}
\author[N. Gavitone, R. Sannipoli]{
	Nunzia Gavitone, Rossano Sannipoli}
\address{Dipartimento di Matematica e Applicazioni ``R. Caccioppoli'', Universit\`a degli studi di Napoli Federico II \\ Via Cintia, Complesso Universitario Monte S. Angelo, 80126 Napoli, Italy.}
\email{nunzia.gavitone@unina.it}
\address{Dipartimento di Matematica e Applicazioni ``R. Caccioppoli'', Universit\`a degli studi di Napoli Federico II \\ Via Cintia, Complesso Universitario Monte S. Angelo, 80126 Napoli, Italy.}
\email{rossano.sannipoli@unina.it}

   
 

\begin{document}
\maketitle
\begin{abstract}
In this paper we study a Steklov-Robin eigenvalue problem for the Laplacian in annular domains. More precisely, we consider $\Omega=\Omega_0 \setminus \overline{B}_{r}$, where $B_{r}$ is the ball centered at the origin with radius $r>0$ and  $\Omega_0\subset\mathbb{R}^n$, $n\geq 2$, is an open, bounded set with Lipschitz boundary, such that $\overline{B}_{r}\subset \Omega_0$. We impose a Steklov condition on the outer boundary and a Robin condition involving a positive $L^{\infty}$ function $\beta(x)$ on the inner boundary. Then, we  study  the  first  eigenvalue $\sigma_{\beta}(\Omega)$  and its main properties. In particular,  we investigate  the behaviour of $\sigma_{\beta}(\Omega)$ when we let vary the $L^1$-norm of $\beta$ and the radius of the inner ball. Furthermore, we study the asymptotic behaviour of the corresponding eigenfunctions when $\beta$ is a positive parameter that  goes to infinity.\\ 

\noindent\textsc{MSC 2020:} 35B40, 35J25, 35P15. \\
\textsc{Keywords}:  Laplacian eigenvalue, Steklov-Robin boundary conditions.
\end{abstract}
\section{Introduction}
The study of eigenvalue problems in annular domains is, actually, a topic widely studied. In this kind of problems, usually,  the shape of the hole is  spherical and  different conditions are imposed on the two components of the boundary. More precisely this problems are defined in annular domains having the following form $\Omega=\Omega_0\setminus \overline{B_{r}}$. Here $\Omega_0\subset\mathbb{R}^n$, $n\geq 2$, is an open, bounded, connected  set with Lipschitz boundary and $B_{r}$ is the  ball of radius $r>0$ centered at the origin such that  $\overline{B_{r}}\subset\Omega_0$. Recently, the following Steklov-Dirichlet Laplacian eigenvalue problem has attracted the attention of many mathematicians  ( see for instance \cite{verma2018bounds,verma2020eigenvalue, gavitone2021isoperimetric,paoli2020stability,hong2020shape, seo2021shape,gavitone2021monotonicity} and the references therein) 

\begin{equation}\label{proSD}
\begin{cases}
\Delta u=0 & \mbox{in}\ \Omega\vspace{0.2cm}\\
\dfrac{\de u}{\de \nu}=\sigma u&\mbox{on}\ \partial\Omega_0\vspace{0.2cm}\\ 
u=0&\mbox{on}\ \partial B_{r}, 
\end{cases}
\end{equation}
where $\nu$ is the outer unit normal to $\partial \Omega$.  The first eigenvalue to the problem \eqref{proSD} has the following variational characterization
\begin{equation} \label{SDir}
     \sigma_D(\Omega)=
\inf_{\substack{v\in H^{1}_{\partial B_r}(\Omega)\\ v\not \equiv0} } \frac{\int_{\Omega}\abs{\nabla v}^2\,dx}{\int_{\partial \Omega_0} u^2\,d\mathcal{H}^{n-1}},
\end{equation}
 where $H^{1}_{\partial B_r}(\Omega)$ is the set of those Sobolev functions vanishing on the boundary of the hole. 
Since both Steklov and Dirichlet  boundary conditions are imposed, it is interesting to understand what properties $\sigma_D(\Omega)$ inherits from them.
 In \cite{paoli2020stability,dittmar2005eigenvalue,hong2020shape}, the authors prove that the infimum is actually a minimum and that it is achieved by a function $u\in H^{1}_{\partial B_r}(\Omega)$ which is a weak solution to problem \eqref{proSD} with constant sign and that 
$\sigma_D(\Omega)$ is simple. This means that, even if a Steklov condition appears in \eqref{proSD},  $\sigma_D(\Omega)$ has the same properties of the first Dirichlet eigenvalue. Nevertheless, in \cite{paoli2020stability,verma2020eigenvalue,hong2020shape,ftouh2022place,seo2021shape,gavitone2021monotonicity} several shape optimization problems related to $\sigma_D(\Omega)$ are studied. In this case it is possible to  observe that the Steklov condition influences the behavior of $\sigma_D(\Omega)$. Indeed, one of the main property of  $\sigma_D(\Omega)$  for instance is that it is bounded from above when both the measure of $\Omega_0$ and the radius of the hole are fixed (see for instance \cite{paoli2020stability,gavitone2021isoperimetric}), unlike the first Dirichlet Laplacian eigenvalue.
 
Our aim is to study the eigenvalue problem obtained replacing the Dirichlet boundary condition in \eqref{proSD} with a Robin one. More precisely we deal  with the following Steklov-Robin eigenvalue problem 
\begin{equation}\label{proSR}
\begin{cases}
\Delta u=0 & \mbox{in}\ \Omega\vspace{0.2cm}\\
\dfrac{\de u}{\de \nu}=\sigma u&\mbox{on}\ \partial\Omega_0\vspace{0.2cm}\\ 
\dfrac{\de u}{\de \nu}+\beta(x) u=0&\mbox{on}\ \partial B_{r}, 
\end{cases}
\end{equation}
where  $\beta(x)\in  L^{\infty}(\partial B_r) $ is a positive function such that $\|\beta\|_{L^1(\partial B_r)}=m>0$.\\

\begin{defn}
	A real number $\sigma(\Omega)$ and a function $u\in H^1(\Omega)$ are, respectively, called eigenvalue of \eqref{proSR} and associated eigenfunction  to $\sigma(\Omega)$, if and only if 
	\begin{equation*}\label{WS}
	\int_{\Omega} \langle\nabla u,\nabla \varphi\rangle \;dx+\int_{\partial B_r}\beta(x)u\varphi \,d\,\mathcal{H}^{n-1}=\sigma(\Omega)\int_{\partial\Omega_0}u \varphi \;d\mathcal{H}^{n-1}
	\end{equation*}
	for every $\varphi\in H^1(\Omega)$.
\end{defn}

The aim of this paper is to study the first eigenvalue $\sigma_{\beta}(\Omega)$ of \eqref{proSR}  defined as (see Section 3 for the details)
\begin{equation}
\label{eig1}
  \sigma_{\beta}(\Omega)  = \inf_{v \in H^1(\Omega)\setminus \{0\}}\dfrac{\ds\int _{\Omega}|\nabla v|^2\;dx+\int_{\partial B_r}\beta(x)v^2\,d\mathcal{H}^{n-1}}{\ds\int_{\partial\Omega_0}v^2\;d\mathcal{H}^{n-1}}.
\end{equation}
We prove that similarly to problem \eqref{proSD},  $\sigma_{\beta}(\Omega)$ is actually a minimum, it is simple, and that the corresponding eigenfunctions have constant sign. Hence, also in this case, despite the Steklov condition, $\sigma_{\beta}(\Omega)$  is formally a Robin type eigenvalue.

When $\Omega$ is a spherical shell, that is $\Omega =B_R\setminus \overline{B_r}$, and $\beta(x)=\beta$ is a positive constant,  $\sigma_{\beta}(\Omega)$ and the corresponding eigenfunctions can be explicitly computed; in particular, in the radial case, it is straightforward to prove the asymptotic behaviors of $\sigma_{\beta}(\Omega)$ and the corresponding eigenfunctions when $\beta\to 0$ and $\beta \to \infty$ (see Section 4 for the details).

We observe that $\sigma_{\beta}(\Omega)$ depends clearly also on $\beta$ and consequently on its $L^1$-norm $m$. When $\Omega$ is a general open, bounded set with Liptschitz boundary, we expect that the same asymptotic behavior of $\sigma_{\beta}(\Omega)$ and the corresponding eigenfunctions still holds, when we let vary the $L^1-$norm $m$ of $\beta(x)$.
In order to show this,   we will prove some estimates on $\sigma_{\beta}(\Omega)$ in the spirit of the ones contained in \cite{kuttler1973note} for the first Robin Laplacian eigenvalue (see also \cite{di2022two} for a more general case). More precisely, let us define the following quantities
\begin{equation}
\label{kutmu}
\mu_1(\Omega):= \inf_{\substack{v \in H^{1}(\Omega)\setminus \{0\}\\ \int_{\partial B_r}v\,d \mathcal H^{n-1}=0}}\displaystyle \frac{\int_{\Omega}|\nabla v|^2\,dx}{\int_{\partial\Omega_0}v^2\,d\mathcal{H}^{n-1}},
\end{equation}
and if $u$ is the eigenfunction corresponding to $\sigma_{\beta}(\Omega)$
\begin{equation}
\label{q1}
    q_\beta(\Omega) = \inf_{\ds\substack{\Delta w=0, \, w\neq 0\\ \frac{\partial w}{\partial \nu}=0 \, \textit{on}\,\, \partial \Omega_0}}\frac{\ds\int_{\partial B_r}\beta(x)w^2\,d\mathcal{H}^{n-1}}{\ds\int_{\partial \Omega_0}w^2\,d\mathcal{H}^{n-1}},
\end{equation}
We observe that $\mu_1(\Omega)$ is the first nontrivial Steklov Laplacian  eigenvalue in $\Omega$. Moreover if $\beta(x)=\beta$ is constant, then
\begin{equation}
    q_\beta(\Omega) = \beta q(\Omega),
\end{equation}
where 
\begin{equation}\label{qomega}
     q(\Omega) = \inf_{\ds\substack{\Delta w=0, \, w\neq 0\\ \frac{\partial w}{\partial \nu}=0 \, \textit{on}\,\, \partial \Omega_0}}\frac{\ds\int_{\partial B_r}w^2\,d\mathcal{H}^{n-1}}{\ds\int_{\partial \Omega_0}w^2\,d\mathcal{H}^{n-1}}.
\end{equation}

Then our result is the following
\begin{thm}\label{mainstime}
     Let $\Omega_0 \subset \R^n$ be an open, bounded set with Lipschitz boundary and let $\Omega=\Omega_0 \setminus \overline{B_r}$, where $B_r$ is the ball centered at the origin and with radius $r$ such that $\overline{B_r}\subset \Omega_0$. Then the following estimates hold
	\label{main}
	\begin{equation}\label{Kutt_mi}
	\frac{1}{\sigma_{\beta}(\Omega)}\leq\frac{1}{\mu_1(\Omega)}+\frac{P(\Omega_0)}{m},\\
	\end{equation}
	and
	\begin{equation}\label{Kutt_40i}
	\frac{1}{\sigma_{\beta}(\Omega)}\leq\frac{1}{\sigma_D(\Omega)}+\frac{1}{ q_\beta(\Omega) },
	\end{equation}
	where $\mu_1(\Omega)$ and $q_\beta(\Omega)$ are defined in \eqref{kutmu} and \eqref{q1}, and $\sigma_{\beta}(\Omega)$ and $\sigma_D(\Omega)$  are  the first Steklov-Robin  and Steklov Dirichlet eigenvalue of $\Omega$  defined in \eqref{eig1} and \eqref{SDir} respectively.
\end{thm}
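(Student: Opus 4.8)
The plan is to prove both estimates by the same device: take the first eigenfunction $u$ of $\sigma_\beta(\Omega)$, normalized so that $\int_{\partial\Omega_0}u^2\,d\mathcal H^{n-1}=1$ (so that $\sigma_\beta(\Omega)=\int_\Omega|\nabla u|^2\,dx+\int_{\partial B_r}\beta u^2\,d\mathcal H^{n-1}$), split $u$ into two pieces that are orthogonal in the Dirichlet form, bound each piece by one of the two competing eigenvalues, and recombine via the elementary inequalities $\tfrac{a}{t}+\tfrac{tb}{1}\ge 2\sqrt{ab}$ (AM--GM on the cross terms) and $\|f+g\|_{L^2}\le\|f\|_{L^2}+\|g\|_{L^2}$ (Minkowski). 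The orthogonal splitting is the heart of the matter; the recombination is the same convexity bookkeeping in both cases.

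For \eqref{Kutt_40i} I would introduce the harmonic function $h$ solving $\Delta h=0$ in $\Omega$, $\frac{\partial h}{\partial\nu}=0$ on $\partial\Omega_0$, $h=u$ on $\partial B_r$, and set $g:=u-h$, so that $g=0$ on $\partial B_r$, i.e. $g\in H^1_{\partial B_r}(\Omega)$. The key point is the orthogonality $\int_\Omega\nabla g\cdot\nabla h\,dx=\int_{\partial\Omega}g\,\tfrac{\partial h}{\partial\nu}\,d\mathcal H^{n-1}=0$, valid because $g$ vanishes on $\partial B_r$ and $\tfrac{\partial h}{\partial\nu}$ vanishes on $\partial\Omega_0$; hence $\int_\Omega|\nabla u|^2=\int_\Omega|\nabla g|^2+\int_\Omega|\nabla h|^2$. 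Since $g$ is admissible in \eqref{SDir} we get $\int_\Omega|\nabla g|^2\ge\sigma_D(\Omega)\,A$ with $A:=\int_{\partial\Omega_0}g^2$, and since $h$ is harmonic with zero Neumann data on $\partial\Omega_0$ it is admissible in \eqref{q1}, so using $u=h$ on $\partial B_r$ we get $\int_{\partial B_r}\beta u^2=\int_{\partial B_r}\beta h^2\ge q_\beta(\Omega)\,B$ with $B:=\int_{\partial\Omega_0}h^2$. Thus $\sigma_\beta(\Omega)\ge\sigma_D(\Omega)A+q_\beta(\Omega)B$; multiplying by $\tfrac1{\sigma_D}+\tfrac1{q_\beta}$ and applying AM--GM gives $\sigma_\beta(\tfrac1{\sigma_D}+\tfrac1{q_\beta})\ge(\sqrt A+\sqrt B)^2$, while Minkowski yields $(\sqrt A+\sqrt B)^2\ge\int_{\partial\Omega_0}(g+h)^2=1$, which is exactly \eqref{Kutt_40i}.

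For \eqref{Kutt_mi} I would run the identical scheme with the \emph{constant} function playing the role of $h$: write $u=w+c$ with $c$ a mean value of $u$ on $\partial B_r$ and $w=u-c$, so that the constant has zero gradient (this replaces the orthogonality, since $\int_\Omega|\nabla u|^2=\int_\Omega|\nabla w|^2$), and note that the Rayleigh quotient \eqref{q1} of a constant equals precisely $m/P(\Omega_0)$. If $c$ is chosen so that $\int_{\partial B_r}w\,d\mathcal H^{n-1}=0$, then $w$ is admissible in \eqref{kutmu} and $\int_\Omega|\nabla u|^2\ge\mu_1(\Omega)\int_{\partial\Omega_0}w^2$; pairing this with a bound $\int_{\partial B_r}\beta u^2\ge m\,c^2=\tfrac{m}{P(\Omega_0)}\int_{\partial\Omega_0}c^2$ and repeating the AM--GM/Minkowski step would produce \eqref{Kutt_mi}.

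The main obstacle is precisely this last estimate of the Robin energy of the constant part. Cauchy--Schwarz on $\partial B_r$ only delivers $\int_{\partial B_r}\beta u^2\ge\tfrac1m\big(\int_{\partial B_r}\beta u\big)^2=m\,\overline c^{\,2}$ for the $\beta$-\emph{weighted} average $\overline c=\tfrac1m\int_{\partial B_r}\beta u$, whereas admissibility in \eqref{kutmu} forces $c$ to be the \emph{unweighted} average $\tfrac1{|\partial B_r|}\int_{\partial B_r}u$; the two coincide when $\beta$ is constant (the case in which the argument closes verbatim and the bound is sharp, cf. Section~4), but for general $\beta\in L^\infty(\partial B_r)$ they must be reconciled before the Minkowski step can be applied with a single constant. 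I expect this to be resolved through the coupling carried by the eigenfunction itself --- in particular the identity $\int_{\partial B_r}\beta u\,d\mathcal H^{n-1}=\sigma_\beta(\Omega)\int_{\partial\Omega_0}u\,d\mathcal H^{n-1}$, obtained by testing the weak formulation with the constant $1$, together with the constant sign of $u$ --- which is the only delicate point, the remainder being the same bookkeeping as for \eqref{Kutt_40i}.
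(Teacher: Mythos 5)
Your argument for \eqref{Kutt_40i} is correct and is essentially the paper's own proof: your $h$ is exactly the paper's auxiliary harmonic function, your $g=u-h$ coincides with the paper's $v$ (both are harmonic, vanish on $\partial B_r$, and carry the Neumann data of $u$ on $\partial\Omega_0$), your orthogonality identity is \eqref{gradientvh}, and your AM--GM/Minkowski recombination reproduces the paper's computation with the steps performed in reverse order. For that half nothing is missing.

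The gap is in \eqref{Kutt_mi}, exactly where you flagged it, and you should know that the paper does not close it either: the paper chooses $c$ to be the $\beta$-weighted average \eqref{c} and then bounds $\int_{\partial\Omega_0}(u-c)^2\,d\mathcal{H}^{n-1}$ by $D(u)/\mu_1(\Omega)$ ``by the definition of $\mu_1(\Omega)$'', but with that choice $u-c$ has $\beta$-weighted mean zero on $\partial B_r$, not mean zero, so it is not admissible in \eqref{kutmu} unless $\beta$ is constant --- precisely the weighted-versus-unweighted mismatch you isolated. Moreover, the reconciliation you hope for (via testing with $\varphi\equiv1$ and the sign of $u$) cannot exist, because with $\mu_1(\Omega)$ as written in \eqref{kutmu} the inequality \eqref{Kutt_mi} is actually false for general positive $\beta\in L^{\infty}(\partial B_r)$. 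Indeed, fix $p\in\partial B_r$ and set
\begin{equation*}
\beta_\eps=\eps+c_\eps\,\chi_{\partial B_r\cap B_\eps(p)},\qquad c_\eps\ \text{chosen so that}\ \|\beta_\eps\|_{L^1(\partial B_r)}=m,
\end{equation*}
and let $\phi_\eps$ be a capacity cutoff with $\phi_\eps=1$ on $B_\eps(p)$, $\phi_\eps$ supported in $B_{\sqrt{\eps}}(p)$ and $\int_{\R^n}|\nabla\phi_\eps|^2\,dx\to0$ (possible in every dimension $n\ge2$). Testing \eqref{varcar} with $v_\eps=1-\phi_\eps$, the Robin term is at most $\eps P(B_r)$ (since $v_\eps$ vanishes where $\beta_\eps$ concentrates), the Dirichlet term tends to $0$, and $\int_{\partial\Omega_0}v_\eps^2\,d\mathcal{H}^{n-1}=P(\Omega_0)$ for small $\eps$; hence $\sigma_{\beta_\eps}(\Omega)\to0$ and $1/\sigma_{\beta_\eps}(\Omega)\to\infty$, while the right-hand side of \eqref{Kutt_mi} is independent of $\eps$. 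The statement, the paper's proof, and your scheme all become correct simultaneously if the constraint $\int_{\partial B_r}v\,d\mathcal{H}^{n-1}=0$ in \eqref{kutmu} is replaced by its weighted version $\int_{\partial B_r}\beta v\,d\mathcal{H}^{n-1}=0$: then $\overline{c}=\frac1m\int_{\partial B_r}\beta u\,d\mathcal{H}^{n-1}$ is simultaneously the right constant for the Minkowski/$\mu_1$ step and for the Cauchy--Schwarz step $m\,\overline{c}^{\,2}\le\int_{\partial B_r}\beta u^2\,d\mathcal{H}^{n-1}$, and your bookkeeping closes verbatim for every $\beta$; for constant $\beta$ the two constraints coincide, which is why both your proof and the paper's are complete in that case.
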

As a consequence of the above estimates we can obtain the quoted asymptotic behaviour of  $\sigma_{\beta}(\Omega)$ with respect to $m$. 
We stress the fact that, when $\beta(x)=\beta$ is a positive constant, Theorem \ref{mainstime} is a key point to prove the convergence of the Steklov-Robin eigenfunctions to the Steklov-Dirichlet eigenfunction as $\beta$ goes to infinity. Indeed, our second main result is the following
\begin{thm}\label{convergence}
Let $\Omega_0 \subset \R^n$ be an open, bounded set with Lipschitz boundary and let $\Omega=\Omega_0 \setminus \overline{B_r}$, where $B_r$ is the ball centered at the origin and with radius $r$ such that $\overline{B_r}\subset \Omega_0$. Let $\beta(x)=\beta$ be a positive constant. Let $u_\beta$ and $v$ be  positive eigenfunctions corresponding to $\sigma_\beta(\Omega)$ and $\sigma_D(\Omega)$ respectively. Then
\begin{equation} \label{H1conv}
    \lim_{\beta \to +\infty} \|u_\beta -v\|_{H^1(\Omega)}= 0.
\end{equation}
\end{thm}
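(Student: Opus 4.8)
The plan is to combine monotonicity with the spectral estimate \eqref{Kutt_40i} to control $\sigma_\beta(\Omega)$, then extract a weak $H^1$-limit of the suitably normalized eigenfunctions, identify it with $v$ by a variational argument, and finally upgrade weak convergence to strong convergence through convergence of the energies. Throughout I normalize the positive eigenfunctions by $\int_{\partial\Omega_0} u_\beta^2\,d\mathcal{H}^{n-1}=\int_{\partial\Omega_0} v^2\,d\mathcal{H}^{n-1}=1$, which together with positivity fixes them uniquely.

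First I would show that $\sigma_\beta(\Omega)\to\sigma_D(\Omega)$ as $\beta\to+\infty$. On the one hand, testing the quotient in \eqref{eig1} with any $w\in H^1_{\partial B_r}(\Omega)$ annihilates the boundary term $\int_{\partial B_r}\beta w^2$, so the admissible class for $\sigma_D(\Omega)$ is contained in that for $\sigma_\beta(\Omega)$ with identical quotient; hence $\sigma_\beta(\Omega)\le\sigma_D(\Omega)$ for every $\beta$. On the other hand, since $\beta$ is constant we have $q_\beta(\Omega)=\beta q(\Omega)$, and \eqref{Kutt_40i} gives $\tfrac1{\sigma_\beta(\Omega)}\le\tfrac1{\sigma_D(\Omega)}+\tfrac1{\beta q(\Omega)}$, whence $\liminf_{\beta\to\infty}\sigma_\beta(\Omega)\ge\sigma_D(\Omega)$. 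Combining the two inequalities yields the limit.

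Next I would derive a uniform $H^1$-bound. From the normalization, $\int_\Omega|\nabla u_\beta|^2 + \beta\int_{\partial B_r}u_\beta^2 = \sigma_\beta(\Omega)\le\sigma_D(\Omega)$, so both $\int_\Omega|\nabla u_\beta|^2$ and $\beta\int_{\partial B_r}u_\beta^2$ stay bounded. A trace--Poincar\'e inequality of the form $\|u\|_{L^2(\Omega)}^2\le C\bigl(\int_\Omega|\nabla u|^2 + \int_{\partial\Omega_0}u^2\bigr)$ then bounds $\|u_\beta\|_{L^2(\Omega)}$, hence $\|u_\beta\|_{H^1(\Omega)}$, uniformly in $\beta$. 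Along a subsequence $u_\beta\rightharpoonup u_\ast$ in $H^1(\Omega)$, with strong convergence in $L^2(\Omega)$ and in $L^2$ of both boundary components by compactness of the trace. Since $\beta\int_{\partial B_r}u_\beta^2$ is bounded while $\beta\to\infty$, we get $\int_{\partial B_r}u_\ast^2=0$, i.e.\ $u_\ast\in H^1_{\partial B_r}(\Omega)$, while $\int_{\partial\Omega_0}u_\ast^2=1$ shows $u_\ast\not\equiv0$. Lower semicontinuity of the Dirichlet energy together with $\int_\Omega|\nabla u_\beta|^2\le\sigma_\beta(\Omega)\to\sigma_D(\Omega)$ forces $\int_\Omega|\nabla u_\ast|^2\le\sigma_D(\Omega)$; but $u_\ast$ is admissible for \eqref{SDir} with $\int_{\partial\Omega_0}u_\ast^2=1$, so the reverse inequality holds and $u_\ast$ is a minimizer, hence an eigenfunction for $\sigma_D(\Omega)$. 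Being a limit of positive functions it is nonnegative, and by simplicity of $\sigma_D(\Omega)$ together with the normalization it must equal $v$. As the limit is independent of the subsequence, the whole family converges.

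Finally I would upgrade to strong $H^1$-convergence. The chain of inequalities above in fact yields $\int_\Omega|\nabla u_\beta|^2\to\int_\Omega|\nabla v|^2$, since the $\liminf$ is bounded below by weak lower semicontinuity and the $\limsup$ above by $\lim_{\beta\to\infty}\sigma_\beta(\Omega)=\sigma_D(\Omega)=\int_\Omega|\nabla v|^2$. As $\nabla u_\beta\rightharpoonup\nabla v$ weakly in $L^2(\Omega)$ and the norms converge, weak convergence improves to strong convergence of the gradients in $L^2(\Omega)$; combined with the strong $L^2(\Omega)$-convergence of $u_\beta$ from Rellich's theorem, this gives \eqref{H1conv}. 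The main obstacle I anticipate is the clean identification of $u_\ast$ with $v$ --- ensuring the weak limit is genuinely a Steklov--Dirichlet eigenfunction rather than merely an element of $H^1_{\partial B_r}(\Omega)$, and pinning it down via simplicity and positivity; the energy-convergence argument that promotes weak to strong $H^1$-convergence is then routine.
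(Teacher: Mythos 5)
Your proof is correct, and it shares the paper's skeleton: both arguments rest on $\sigma_\beta(\Omega)\to\sigma_D(\Omega)$ (obtained exactly as you do, from \eqref{SBminSD} and \eqref{Kutt_40i} with $q_\beta(\Omega)=\beta q(\Omega)$), on a uniform $H^1$ bound for the normalized eigenfunctions, on the observation that $\beta\int_{\partial B_r}u_\beta^2\le\sigma_D(\Omega)$ forces the weak limit to vanish on $\partial B_r$, and on simplicity plus positivity to identify that limit with $v$. You diverge from the paper in two places. First, the identification step: the paper inserts the weak limit $\bar u$ as a test function in the weak formulation \eqref{WSR} of $u_{\beta_k}$ and passes to the limit to conclude $\int_\Omega|\nabla\bar u|^2\,dx=\sigma_D(\Omega)\int_{\partial\Omega_0}\bar u^2\,d\mathcal H^{n-1}$, whereas you argue variationally, combining weak lower semicontinuity ($\int_\Omega|\nabla u_\ast|^2\,dx\le\lim_\beta\sigma_\beta(\Omega)=\sigma_D(\Omega)$) with the admissibility of $u_\ast$ in \eqref{SDir} to get the reverse inequality, so that $u_\ast$ is a minimizer. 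Second, the upgrade to strong convergence: the paper returns to Friedrich's inequality \eqref{friedin} and expands $\|\nabla u_\beta-\nabla v\|^2_{L^2(\Omega)}+\|u_\beta-v\|^2_{L^2(\partial\Omega)}$ through the two weak formulations, discarding the term $(1-\beta)\int_{\partial B_r}u_\beta^2\,d\mathcal H^{n-1}\le 0$ for $\beta>1$ and using $\int_{\partial\Omega_0}u_\beta v\,d\mathcal H^{n-1}\to1$; you instead observe that the Dirichlet energies converge and invoke the Hilbert-space fact that weak convergence plus convergence of norms yields strong convergence, together with Rellich's theorem for the $L^2(\Omega)$ part. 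Your route is arguably cleaner and avoids the paper's algebraic bookkeeping (whose displayed formulas, incidentally, omit the squares on the norms), while the paper's expansion has the mild advantage of expressing the $H^1$ error explicitly in terms of $\sigma_D(\Omega)-\sigma_\beta(\Omega)$ and the boundary error. One small point to make airtight: the trace--Poincar\'e inequality you quote, with the boundary term only on $\partial\Omega_0$, is not the one stated in the paper; either prove it by the usual compactness argument or simply use \eqref{friedin} together with the bound $\int_{\partial B_r}u_\beta^2\,d\mathcal H^{n-1}\le\sigma_D(\Omega)/\beta$, which is how the paper proceeds.
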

\begin{rem}
    The previous result states that the first  Steklov-Robin eigenfunctions converge to the Steklov-Dirichlet ones in $H^1(\Omega)$, when $\beta$ goes to $\infty$. Actually, since they are smooth functions, by Theorem \ref{mainstime} and  Morrey's inequality  we can say that the convergence result holds true in  $C^0(\Omega).$
\end{rem}

The structure of the paper is the following: in Section 2 we recall some preliminary result and fix the notation we will use in the rest of the paper. In Section 3 we state and prove the main properties of the first eigenvalue $\sigma_{\beta}(\Omega)$ and of the corresponding eigenfunctions. In Section 4 we focus on the radial case and finally Section 5 and 6 are devoted for the computation of some upper and lower bounds for $\sigma_{\beta}(\Omega)$ and the proof of Theorems \ref{mainstime} - \ref{convergence}.

\section{Preliminaries and notations}
Throughout this paper, we denote by $B_r(x_0)$ and $B_r$ the balls in $\mathbb{R}^n$ of radius $r>0$ centered at  $x_0\in\mathbb{R}^n$ and at the origin, respectively. Moreover with $B$, $\mathbb{S}^{n-1}$ and $\omega_n$ we will denote respectively the unit ball of $\mathbb R^{n}$, its boundary and its volume. Let $r,R$ be such that $0<r<R$, the spherical shell will be denoted as follows:
\begin{equation*}
A_{r,R}=\{x \in \mathbb R^{n} \colon r<|x|<R\}.
\end{equation*}
Moreover, the $(n-1)$-dimensional Hausdorff measure in $\mathbb{R}^n$ will be denoted by $\mathcal H^{n-1}$ and the Euclidean scalar product in $\mathbb{R}^n$ is denoted by $\langle\cdot,\cdot\rangle$.\\
Let $D\subseteq\mathbb{R}^n$ be an open bounded set and let $E\subseteq\R^{n}$ be a measurable set.  For the sake of completeness, we recall here the definition of the perimeter of $E$ in $D$ (see for instance \cite{maggi2012sets}), that is
\begin{equation*}
P(E;D)=\sup\left\{  \int_E {\rm div} \varphi\:dx :\;\varphi\in C^{\infty}_c(D;\mathbb{R}^n),\;||\varphi||_{\infty}\leq 1 \right\}.
\end{equation*}
The perimeter of $E$ in $\mathbb{R}^n$ will be denoted by $P(E)$ and, if $P(E)<\infty$, we say that $E$ is a set of finite perimeter. Moreover, if $E$ has Lipschitz boundary, it holds
\[
P(E)=\mathcal H^{n-1}(\partial E).
\]
The Lebesgue measure of a measurable set $E \subset \mathbb R^{n}$ will be denoted by $V(E)$.
Moreover, we define the inradius of $E\subset \mathbb R^{n}$ as 
\begin{equation}\label{inradius_def}
\rho( E)=\sup_{x\in E}\; \inf_{y\in\partial E} |x-y|,
\end{equation} 
while the diameter of $E$ is
\[
\diam (E)=\sup_{x,y \in E} |x-y|.
\]
\subsection{Some properties of convex sets}
In what follows we recall some properties of the convex bodies, i.e. compact convex sets without empty interior. We refer to  \cite{schneider2014convex} for further properties and the details.

Let $K \subset \R^n$ be a bounded convex body. The support function $h_K$ of $K$ is defined as
follows
\[
h_K \colon \mathbb S^{n-1}\to  \R, \quad  h_K(x) = \sup_{y \in K}(x,y).\]
If the origin belongs to $K$ then $h_K$ is non-negative and $h_K(x) \leq \text{diam}(K)$ for every $ x \in \mathbb S^{n-1}$. 	
Let $K \subset \R^n$ be a bounded convex body such that the origin is an interior point of $K$. The radial function of $K$ is defined as follows
\begin{equation} 
\label{ro}
\rho_K(x)=\sup\{\lambda\ge 0 \colon \lambda x \in K\}, \quad x\in \mathbb S^{n-1},
\end{equation}
and it is a Lipschitz function. 
Moreover the boundary of $K$ can be written as 
\begin{equation}
\label{rad}
\partial K = \{ x\,\rho_K(x),  x \in \mathbb S^{n-1}\}.
\end{equation}
Let us define the minimum and the maximum distance of $\partial K$ from the origin as follows
\begin{equation}
\label{re}
R_m=\min_{\mathbb S^{n-1}}\rho_K(x), \qquad\quad R_M=\max_{\mathbb S^{n-1}}\rho_K(x).
\end{equation}
Finally, if $f\colon \partial K \to \R$ is $\mathcal{H}^{n-1}-$integrable  the following formula for the change of variable given by the radial map holds
\[
\int_{\partial K} f \, d \mathcal H^{n-1}=\int_{\mathbb S^{n-1}}f(r_K(x))(\rho_K(x))^{n-1}\sqrt{1+\left(\frac{|\nabla_\tau \rho_K(x)|}{\rho_K(x)}\right)^2}\,d\mathcal H^{n-1}.
\]
where $\nabla_{\tau}$ denotes the tangential gradient, which is the projection of the gradient on the tangent plane.

\subsection{Trace and Friedrich inequalities}
Here we recall some known inequalities that will be useful in the sequel. Let $\Omega$ be an open bounded subset of $\mathbb{R}^n$ with Lipschitz boundary, then by the classical Sobolev trace inequality (see \cite{evans2010partial}) we have that  \begin{equation}\label{tracein}
    \|u\|_{L^2(\partial \Omega)} \le C \|u\|_{H^1(\Omega)},
\end{equation}
for some positive constant $C>0$. Moreover the embedding operator of $H^1(\Omega)$ into $L^2(\partial \Omega)$ is compact.\\
Another important embedding theorem is a consequence of the so-called  Friedrich's inequality (see for instance \cite{friedrichs1928randwert, maz2013sobolev} and for a more general case \cite{cianchi2016sobolev}). 
Let $H^1(\Omega,\partial \Omega)$ the completion of the set of functions in $C^{\infty}(\Omega)\cap C(\Bar{\Omega})$ which have weak gradient in $L^2(\Omega)$, equipped with the following norm (see \cite{maz2013sobolev} for the details)

\begin{equation*}
    \|u\|_{H^1(\Omega,\partial \Omega)}=\|\nabla u\|_{L^2(\Omega)}+\|u\|_{L^2(\partial\Omega)}.
\end{equation*}
Fridrich's inequality states that
\begin{equation}\label{friedin}
    \|u\|_{L^2(\Omega)}\le C(\|\nabla u\|_{L^2(\Omega)}+\|u\|_{L^2(\partial\Omega)})
\end{equation}
for some positive constant $C>0$. Also in this case the embedding operator of $H^1(\Omega,\partial \Omega)$ into $L^2(\Omega)$ is compact (see Corollary 3, p. 392 in \cite{maz2013sobolev}).
\section{Existence and basic properties of $\sigma_{\beta}(\Omega)$} \label{secEX}

In this section we define and study the main properties of the first Steklov-Robin Laplacian eigenvalue in $\Omega=\Omega_0 \setminus \overline{B}_r$.  Let $\sigma_{\beta}(\Omega)$ be the following quantity
\begin{equation} \label{inf}
\sigma_{\beta}( \Omega)=\inf_{\substack{v\in H^{1}(\Omega)\\ v\not \equiv0} }    J[v],
\end{equation}
where 
\begin{equation}\label{JV}
    J[v]= \dfrac{\ds\int _{\Omega}|\nabla v|^2\;dx+\int_{\partial B_r}\beta(x)v^2\,d\mathcal{H}^{n-1}}{\ds\int_{\partial\Omega_0}v^2\;d\mathcal{H}^{n-1}}
\end{equation}
and $\beta(x)\in L^{\infty}(\partial B_r)$ is a positive function.\\
We observe that by \eqref{inf} we immediately get 
\begin{equation}\label{SBminSD}
	\sigma_{\beta}(\Omega)\le \sigma_D(\Omega),
\end{equation}
where $\sigma_D(\Omega)$ is the first Steklov-Dirichlet eigenvalue defined in \eqref{SDir}.
In the next result we prove that $\sigma_{\beta}(\Omega)$ is the first eigenvalue of problem \eqref{proSR} and we show some basic properties of $\sigma_{\beta}(\Omega)$ and its corresponding eigenfunctions.

\begin{thm}
Let $n\ge 2$ and $\Omega=\Omega_0\setminus \overline{B}_r$, where $\Omega_0$ is an open, bounded and connected set with Lipschitz boundary in $\mathbb{R}^n$ and $B_r$ a ball centered at the origin of radius $r>0$ such that $\overline{B}_r\subset \Omega_0 $. Then $\sigma_{\beta}(\Omega)$ is actually a minimum, that is
\begin{equation}\label{varcar}
	\sigma_{\beta}( \Omega)=\min_{\substack{v\in H^{1}(\Omega)\\ v\not \equiv0} }    J[v],
\end{equation}
where $J[v]$ is defined in \eqref{JV}. Moreover $\sigma_{\beta}(\Omega)$ is the first eigenvalue of \eqref{proSR}, it is strictly positive and any minimizer has constant sign.
\end{thm}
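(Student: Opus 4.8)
The plan is to establish four assertions: that the infimum $\sigma_\beta(\Omega)$ is attained, that any minimizer is a weak eigenfunction (and that $\sigma_\beta(\Omega)$ is the smallest eigenvalue), that $\sigma_\beta(\Omega)>0$, and that minimizers have constant sign. I would organize the argument around the direct method of the calculus of variations.

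\textbf{Existence of a minimizer.} First I would take a minimizing sequence $(v_k)\subset H^1(\Omega)$, normalized so that $\int_{\partial\Omega_0}v_k^2\,d\mathcal H^{n-1}=1$; then the numerators $\int_\Omega|\nabla v_k|^2\,dx+\int_{\partial B_r}\beta v_k^2\,d\mathcal H^{n-1}$ stay bounded. The gradients are thus bounded in $L^2(\Omega)$. To bound $(v_k)$ in $H^1(\Omega)$ I would invoke Friedrich's inequality \eqref{friedin}, which controls $\|v_k\|_{L^2(\Omega)}$ by $\|\nabla v_k\|_{L^2(\Omega)}+\|v_k\|_{L^2(\partial\Omega)}$; the boundary term $\|v_k\|_{L^2(\partial\Omega)}^2 = \int_{\partial\Omega_0}v_k^2 + \int_{\partial B_r}v_k^2$ is controlled by the normalization on $\partial\Omega_0$ together with the bound on the (positive) Robin term, since $\beta$ is bounded below away from zero on $\partial B_r$ by its positivity — this last point needs care and is discussed below. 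Having $(v_k)$ bounded in $H^1(\Omega)$, I extract a subsequence converging weakly in $H^1(\Omega)$ to some $u$; by the compactness of the trace embedding \eqref{tracein} the convergence is strong in $L^2(\partial\Omega)$, so the denominator passes to the limit and equals $1$, while the numerator is weakly lower semicontinuous (the gradient term by convexity, the boundary term by strong $L^2(\partial B_r)$ convergence). Hence $J[u]\le\liminf J[v_k]=\sigma_\beta(\Omega)$, forcing equality and proving \eqref{varcar}.

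\textbf{Euler--Lagrange equation, positivity, and constant sign.} Next, for a minimizer $u$ I would perform the standard first-variation computation: replacing $u$ by $u+t\varphi$ for $\varphi\in H^1(\Omega)$ and differentiating $J$ at $t=0$ yields precisely the weak formulation in the Definition, so $u$ is an eigenfunction with eigenvalue $\sigma_\beta(\Omega)$, and by the variational characterization no smaller eigenvalue exists. For strict positivity I would note that $\sigma_\beta(\Omega)\ge 0$ trivially, and that $\sigma_\beta(\Omega)=0$ would force a nonzero $u$ with $\int_\Omega|\nabla u|^2=0$ and $\int_{\partial B_r}\beta u^2=0$; the first gives that $u$ is constant on the connected set $\Omega$, and then the second, using $\beta>0$ with positive $L^1$-norm, forces that constant to be zero, contradicting $\int_{\partial\Omega_0}u^2=1$. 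For the sign, I would use that $J[|v|]=J[v]$, since $|\nabla|u||=|\nabla u|$ a.e. and the boundary integrands depend only on $u^2$; thus $|u|$ is also a minimizer, hence a nonnegative eigenfunction, and the strong maximum principle for harmonic functions (together with Hopf's lemma at the boundary to rule out interior or boundary zeros) yields that $|u|>0$, so $u$ has constant sign.

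\textbf{Main obstacle.} The step I expect to be most delicate is securing the $H^1(\Omega)$ a priori bound from the normalization, because the Rayleigh quotient only controls the trace on the \emph{outer} boundary $\partial\Omega_0$ and the gradient, not the full boundary trace that Friedrich's inequality requires. The resolution is that the bounded Robin term $\int_{\partial B_r}\beta u^2$, combined with the hypothesis that $\beta$ is positive (hence bounded below on the set where it is nonvanishing), controls $\int_{\partial B_r}u^2$; more robustly, one can use that the trace on $\partial B_r$ is controlled by $\|u\|_{H^1(\Omega)}$ via \eqref{tracein} and then absorb it, or argue by contradiction assuming $\|v_k\|_{H^1}\to\infty$ and normalizing to reach a nonzero limiting function that is constant on $\Omega$ and vanishes on $\partial\Omega_0$, a contradiction. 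I would present the cleanest version, namely the contradiction/normalization argument, since it sidesteps any quantitative lower bound on $\beta$.
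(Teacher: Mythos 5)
Your proposal is correct and, in its overall architecture (direct method for existence, first variation to get the Euler--Lagrange equation, positivity via connectedness of $\Omega$ and $m>0$, constant sign via $J[|u|]=J[u]$ plus the strong maximum principle/Harnack), it matches the paper's proof. The genuine difference is in the key compactness step, and there your route is actually more robust than the paper's. The paper bounds the minimizing sequence by observing $J[u_n]\le\sigma_D(\Omega)$ and then invokes Friedrich's inequality \eqref{friedin} together with the compact embedding of $H^1(\Omega,\partial\Omega)$ into $L^2(\Omega)$; but this requires a bound on the trace over the \emph{whole} boundary $\partial\Omega=\partial\Omega_0\cup\partial B_r$, and the bound $\int_{\partial B_r}\beta u_n^2\,d\mathcal{H}^{n-1}\le\sigma_D(\Omega)$ controls $\int_{\partial B_r}u_n^2\,d\mathcal{H}^{n-1}$ only if $\beta$ is bounded below away from zero, which the hypotheses ($\beta\in L^\infty$, positive, $\|\beta\|_{L^1}=m>0$) do not guarantee. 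Your normalization/contradiction argument --- if $\|v_k\|_{L^2(\Omega)}\to\infty$, rescale to $w_k=v_k/\|v_k\|_{L^2(\Omega)}$, use Rellich compactness and weak lower semicontinuity to produce a constant limit $w$ with $\|w\|_{L^2(\Omega)}=1$ and vanishing trace on $\partial\Omega_0$, contradicting connectedness of $\Omega$ --- needs only the gradient bound and the outer-boundary normalization, so it works for any positive $\beta\in L^\infty$ and thereby closes this gap. Two caveats on your own text: the parenthetical claim that positivity of $\beta$ makes it ``bounded below on the set where it is nonvanishing'' is false (a positive function can have infimum zero), and the suggestion to control the trace on $\partial B_r$ via \eqref{tracein} and then ``absorb it'' is circular, since \eqref{tracein} involves the full $H^1$ norm you are trying to bound; you are right to discard both and present the contradiction argument, which is the version that actually works.
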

\begin{proof}
	Let us notice that the Rayleigh quotient $J[w]$ defined in the previous proposition is always non-negative and $0$-homogeneous. Let us consider a minimizing normalized sequence $\{u_n\}_{n\in\mathbb{N}}$ such that $\|u_n\|_{L^2(\partial \Omega_0)}=1$, i.e. $\lim_{n\to \infty} J[u_n]= \sigma_{\beta}(\Omega)$. By \eqref{SBminSD},  $J[u_n]\le \sigma_D(\Omega)$, then by Friedrich's inequality \eqref{friedin},  $\|u_n\|_{L^2(\Omega)}$ is uniformly bounded from above. By the compactness of the embedding $H^1(\Omega,\partial\Omega)\subset L^2(\Omega)$, there exists a subsequence, still denoted by $u_n$, and a function $u\in H^1(\Omega)$ with $||u||_{L^2(\partial\Omega_0)}=1$, such that $u_n\to u$ strongly in $L^2(\Omega)$, hence also almost everywhere, and $\nabla u_n\rightharpoonup \nabla u$ weakly in $L^2(\Omega)$. Moreover, by the compactness of the trace embedding theorem \eqref{tracein},
\phantom{ }$u_n$ converges strongly  to $u$ also in $L^2(\de \Omega)$ and almost everywhere on $\de \Omega$ to  $u$. Then, by weak lower semicontinuity we have
\begin{equation*}
\lim\limits_{n\to+\infty }J[u_n]\geq J[u].
\end{equation*}
Hence the existence of a minimizer $u\in H^1(\Omega)$ follows.\\

It is obvious the fact that $\sigma_{\beta}(\Omega)\ge 0$. By contradiction let us suppose that $\sigma_{\beta}(\Omega)=0$. This means that
\begin{equation*}
    \int_{\Omega}|\nabla u|^2\,dx + \int_{\partial B_r}\beta(x)u^2\,d\mathcal{H}^{n-1}=0.
\end{equation*}
It follows that $\|\nabla u\|_{L^2(\Omega)}$ and $\|\sqrt{\beta(x)}u\|_{L^2(\partial B_r)}$ are both zero. From the first we have that $u$ is constant a.e. in $\Omega$ and then it must be $\beta(x)=0$ a.e. on $\partial B_r$, which is an absurd. Therefore $\sigma_{\beta}(\Omega)>0$.\\
By classical arguments of Calculus of Variation it is easy to prove that \eqref{proSR} is the Euler-Lagrange equation corresponding to \eqref{varcar}. Here we write down the proof for completeness. Let $u\in H^1(\Omega)$ be minimum of the Rayleigh quotient \eqref{JV} and let $\lambda\in\mathbb{R}$ its value, i.e. $J[u]=\lambda$. Let us now consider the first variation of $J[\cdot]$. If $v\in H^1(\Omega),$ we define the following function
	\begin{equation*}
	f(\varepsilon)= J[u+\varepsilon v].
	\end{equation*}
	It is clear that $f(0)=\lambda$ and in particular we have that
	\begin{equation*}
	\begin{split}
		\int_{\partial \Omega_0}u^2 \,d\mathcal{H}^{n-1}\cdot f'(0)&=2\bigg(\int_{\Omega}\langle \nabla u,\nabla v\rangle\,dx+ \int_{\partial B_r}\beta(x)uv\,d\mathcal{H}^{n-1}\bigg)\int_{\partial \Omega_0}u^2 \,d\mathcal{H}^{n-1}\\
		&-\bigg(\int_{\Omega}|\nabla u|^2 \,dx+ \int_{\partial B_r}\beta(x)u^2\,d\mathcal{H}^{n-1}\bigg)\int_{\partial \Omega_0}uv \,d\mathcal{H}^{n-1}=0
	\end{split}
	\end{equation*}
if and only if
\begin{equation*}
	\dfrac{\int_{\Omega}\langle \nabla u,\nabla v\rangle\,dx+ \int_{\partial B_r}\beta(x)uv\,d\mathcal{H}^{n-1}}{\int_{\partial \Omega_0}uv \,d\mathcal{H}^{n-1}}= \dfrac{\int_{\Omega}|\nabla u|^2 \,dx+ \int_{\partial B_r}\beta(x)u^2\,d\mathcal{H}^{n-1}}{\int_{\partial \Omega_0}u^2 \,d\mathcal{H}^{n-1}}=\lambda.
\end{equation*}
Since the relation written above is valid for every $v \in H^1(\Omega)$, the proposition is proved by definition of weak solution. \\
In particular it follows that $\sigma_{\beta}(\Omega)$ is the smallest eigenvalue of problem \eqref{proSR}. Indeed let us suppose that $v$ is another eigenfunction of \eqref{proSR} with corresponding eigenvalue $\tilde{\sigma}$. Then an integration by parts gives
\begin{equation*}
\sigma_{\beta}(\Omega) \le \dfrac{\int_{\Omega}|\nabla v|^2 \,dx+ \int_{\partial B_r}\beta(x)v^2\,d\mathcal{H}^{n-1}}{\int_{\partial \Omega_0}v^2 \,d\mathcal{H}^{n-1}} = \dfrac{\int_{\partial \Omega}\frac{\partial u}{\partial \nu}u\,d\mathcal{H}^{n-1}+ \int_{\partial B_r}\beta(x)u^2\,d\mathcal{H}^{n-1}}{\int_{\partial \Omega_0}u^2 \,d\mathcal{H}^{n-1}}=\tilde{\sigma}.
\end{equation*}
It only remains to show that any minimizer has constant sign. If $u$ be an eigenfunction corresponding to $\sigma_{\beta}(\Omega)$, then $J[u]=J[|u|]$. This implies that $u=|u|$ on $\Omega$ and therefore $u\geq0$ on $\Omega$. By Harnack inequality (see \cite[Thm 1.1]{trudinger1967harnack}), $u$ is strictly positive on $\Omega$.\\
\end{proof}

Next propositions concern the simplicity of $\sigma_{\beta}(\Omega)$ and sign properties of the corresponding eigenfunctions.

\begin{prop}
$\sigma_{\beta}(\Omega)$ is simple, which means that there exists a unique corresponding eigenfunction up to multiplicative constants.
\end{prop}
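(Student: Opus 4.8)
The plan is to combine two facts already in place for $\sigma_{\beta}(\Omega)$: that the eigenfunctions associated with $\sigma_{\beta}(\Omega)$ form a linear subspace of $H^1(\Omega)$, and that every nonzero such eigenfunction has constant sign, being (up to its sign) strictly positive throughout the connected open set $\Omega$ by the Harnack inequality. The simplicity will then follow from the classical ``ground state'' argument: any two eigenfunctions can be combined linearly into one that vanishes at an interior point, which is incompatible with the strict positivity unless it is identically zero.

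First I would record that a function $w\in H^1(\Omega)$ is an eigenfunction associated with $\sigma_{\beta}(\Omega)$ if and only if it is a minimizer of the Rayleigh quotient $J[\cdot]$ in \eqref{JV}: testing the weak formulation against $\varphi=w$ gives $J[w]=\sigma_{\beta}(\Omega)$, while the previous theorem shows conversely that every minimizer solves \eqref{proSR} with eigenvalue $\sigma_{\beta}(\Omega)$. Consequently the conclusion of that theorem applies to each such eigenfunction, so every one of them, after possibly changing its sign, is strictly positive in $\Omega$ and therefore never vanishes there.

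Next, suppose $u$ and $v$ are two eigenfunctions associated with $\sigma_{\beta}(\Omega)$; replacing them by $|u|$ and $|v|$ if necessary, I may assume both are strictly positive in $\Omega$. Since solutions of $\Delta u=0$ are smooth in the open set $\Omega$, pointwise evaluation at an interior point is legitimate: I fix $x_0\in\Omega$ and set $c=u(x_0)/v(x_0)>0$. By linearity of the weak eigenvalue equation the function $w:=u-c\,v$ is again an eigenfunction associated with $\sigma_{\beta}(\Omega)$, and by construction $w(x_0)=0$. If $w\not\equiv 0$, then $w$ is a nonzero eigenfunction and hence, by the sign property recalled above, cannot vanish at any point of $\Omega$ --- contradicting $w(x_0)=0$. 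Therefore $w\equiv 0$, that is $u=c\,v$, which shows that the eigenspace is one-dimensional and proves the simplicity of $\sigma_{\beta}(\Omega)$.

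The only points requiring care are the vector-space structure of the eigenspace and the legitimacy of pointwise evaluation; both are routine, the former following directly from the linearity of the weak formulation and the latter from the interior regularity of harmonic functions. I do not expect any genuine obstacle beyond these standard verifications.
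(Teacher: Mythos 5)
Your proof is correct and takes essentially the same route as the paper: both form the linear combination $u-c\,v$ of two positive eigenfunctions and use the constant-sign property of first eigenfunctions to force that combination to vanish identically. The only difference is cosmetic — you pick $c=u(x_0)/v(x_0)$ so the combination vanishes at an interior point (justified by interior smoothness of harmonic functions), while the paper picks $\xi$ so that $\int_{\Omega}(u-\xi v)\,dx=0$, which avoids pointwise evaluation altogether; both normalizations are equally valid.
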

\begin{proof}
Let us now suppose that $v$ is another eigenfunction corresponding to $\sigma_{\beta}(\Omega)$. Since $v>0$ in $\Omega$, it follows
\begin{equation*}
	\int_{\Omega} v \,dx\neq 0.
\end{equation*}
Hence there exists a positive number $\xi >0$, such that
\begin{equation*}
	\int_{\Omega}(u-\xi v)\,dx=0.
\end{equation*}
Since $u-\xi v$ is another eigenfunction corresponding to the same eigenvalue, it is necessary that $u=\xi v$ in $\Omega$, which proves the simplicity.
\end{proof}

\begin{prop}
Let $n\ge 2$ and $\Omega=\Omega_0\setminus \overline{B_r}$, where $\Omega_0$ is an open, bounded and connected set with Lipschitz boundary in $\mathbb{R}^n$ and $B_r$ a ball centered at the origin of radius $r>0$ such that $\overline{B_r}\subset \Omega_0 $.
Any nonnegative function $v\in H^1(\Omega)$ that satisfies in the sense of definition \eqref{WS} 
\begin{equation}\label{EQHARM}
    \begin{cases}
\Delta u=0 & \mbox{in}\ \Omega\vspace{0.2cm}\\
\dfrac{\de u}{\de \nu}=\sigma u&\mbox{on}\ \partial\Omega_0\vspace{0.2cm}\\ 
\dfrac{\de u}{\de \nu}+\beta(x) u=0&\mbox{on}\ \partial B_{R}, 
\end{cases}
\end{equation}
is a first eigenfunction of \eqref{EQHARM}, that is $\sigma = \sigma_{\beta}(\Omega)$, and $v=u$ (up to multiplicative constants), where $u$ is the eigenfunction corresponding to the first eigenvalue $\sigma_{\beta}(\Omega)$. 
\end{prop}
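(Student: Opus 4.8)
The plan is to compare the given nonnegative solution $v$ with the first eigenfunction $u$ by the standard orthogonality argument and then to invoke the simplicity already established. First I would record the two weak formulations. Since $u$ is the first eigenfunction, for every $\varphi\in H^1(\Omega)$ one has
\begin{equation*}
\int_\Omega \langle \nabla u,\nabla\varphi\rangle\,dx+\int_{\partial B_r}\beta(x)u\varphi\,d\mathcal H^{n-1}=\sigma_\beta(\Omega)\int_{\partial\Omega_0}u\varphi\,d\mathcal H^{n-1},
\end{equation*}
while $v$ satisfies the same identity with $\sigma_\beta(\Omega)$ replaced by $\sigma$. Choosing $\varphi=v$ in the first identity and $\varphi=u$ in the second and subtracting, the symmetric bilinear terms on the two left-hand sides cancel, leaving
\begin{equation*}
\big(\sigma_\beta(\Omega)-\sigma\big)\int_{\partial\Omega_0}uv\,d\mathcal H^{n-1}=0.
\end{equation*}

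Next I would show that the boundary integral is strictly positive, which forces $\sigma=\sigma_\beta(\Omega)$. Both $u$ and $v$ are nonnegative, so their traces on $\partial\Omega_0$ are nonnegative as well, and the integral is a priori $\ge 0$. The function $u$ is strictly positive in $\Omega$ by the Harnack inequality; being harmonic and satisfying $\partial u/\partial\nu=\sigma_\beta(\Omega)u$ on $\partial\Omega_0$ with $\sigma_\beta(\Omega)>0$, the strong maximum principle together with the Hopf boundary point lemma prevents the trace of $u$ from vanishing on $\partial\Omega_0$, so $u>0$ there (away from a null set). If the integral vanished, then $uv=0$ would hold $\mathcal H^{n-1}$-a.e. on $\partial\Omega_0$, forcing $v=0$ a.e. on $\partial\Omega_0$; testing the weak equation for $v$ with $\varphi=v$ would then give $\int_\Omega|\nabla v|^2\,dx+\int_{\partial B_r}\beta v^2\,d\mathcal H^{n-1}=0$, so $v$ is constant in $\Omega$ and $\beta v=0$ on $\partial B_r$, whence $v\equiv0$, contradicting that $v$ is a nontrivial eigenfunction. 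Therefore $\int_{\partial\Omega_0}uv\,d\mathcal H^{n-1}>0$ and $\sigma=\sigma_\beta(\Omega)$.

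Finally, knowing $\sigma=\sigma_\beta(\Omega)$, I note that testing the equation for $v$ with $\varphi=v$ yields $J[v]=\sigma_\beta(\Omega)$, so $v$ is a minimizer of the Rayleigh quotient. By the existence theorem proved above in this section every minimizer has constant sign and is strictly positive, so $v>0$ in $\Omega$, and the simplicity of $\sigma_\beta(\Omega)$ then gives $v=\xi u$ for some constant $\xi>0$. The delicate point of the argument is precisely the strict positivity of $u$ up to $\partial\Omega_0$: the interior Harnack inequality alone does not control the trace, and one must combine elliptic regularity near the (merely Lipschitz) outer boundary with the Hopf lemma, crucially using $\sigma_\beta(\Omega)>0$, to exclude that the trace of $u$ vanishes on a set of positive $\mathcal H^{n-1}$-measure.
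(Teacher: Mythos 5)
Your orthogonality relation $(\sigma_{\beta}(\Omega)-\sigma)\int_{\partial\Omega_0}uv\,d\mathcal H^{n-1}=0$ is correctly derived, but your route differs from the paper's, and its crucial step has a genuine gap. The paper does not use orthogonality at all: it inserts the Picone-type test function $z=u^2/(v+\varepsilon)$ into the weak equation satisfied by $v$, subtracts the resulting identity from $J[u]=\sigma_{\beta}(\Omega)$, and uses $v/(v+\varepsilon)<1$ to obtain
\begin{equation*}
0\le \int_{\Omega}\left|\nabla u-\frac{u\nabla v}{v+\varepsilon}\right|^2 dx\le \int_{\partial\Omega_0}\left[\sigma_{\beta}(\Omega)-\sigma\frac{v}{v+\varepsilon}\right]u^2\,d\mathcal H^{n-1},
\end{equation*}
so that letting $\varepsilon\to 0$ gives $\sigma_{\beta}(\Omega)\ge\sigma$; since $\sigma_{\beta}(\Omega)$ is the smallest eigenvalue, $\sigma=\sigma_{\beta}(\Omega)$, and simplicity concludes. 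The point of this structure is that it never invokes any boundary-point lemma.

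The gap in your proposal is the assertion that the trace of $u$ is strictly positive $\mathcal H^{n-1}$-a.e.\ on $\partial\Omega_0$. The proposition assumes only a Lipschitz boundary, and there the tools you cite are unavailable: the Hopf boundary point lemma requires an interior ball (or $C^{1,\mathrm{Dini}}$) condition and genuinely fails at Lipschitz corners (in the plane, $\mathrm{Re}(z^{\alpha})$ with $\alpha>1$ is harmonic and positive in a convex sector, vanishes at the vertex, and has vanishing gradient there); pointwise normal derivatives need not exist, and the Steklov condition holds only weakly. Harnack's inequality gives no control on the trace: positive harmonic functions (e.g.\ harmonic measures of boundary sets) can have traces vanishing on sets of positive surface measure. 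What your argument actually requires is the following: if a nonnegative harmonic $H^1$ function satisfies $\partial u/\partial\nu=\sigma_{\beta}(\Omega)u$ weakly on $\partial\Omega_0$ and its trace vanishes on a set $E\subset\partial\Omega_0$ with $\mathcal H^{n-1}(E)>0$, then both Cauchy data vanish on $E$ and hence $u\equiv 0$. This is a unique-continuation-from-the-boundary statement, known for Dini-smooth or convex domains (results of Adolfsson--Escauriaza type) but not for general Lipschitz domains; it is far beyond ``elliptic regularity plus Hopf''. Without it you cannot exclude the scenario $\sigma>\sigma_{\beta}(\Omega)$ together with $\int_{\partial\Omega_0}uv\,d\mathcal H^{n-1}=0$, which is exactly the ordinary orthogonality of eigenfunctions attached to distinct eigenvalues, so your identity alone cannot force $\sigma=\sigma_{\beta}(\Omega)$. (For what it is worth, the paper's own passage to the limit $\varepsilon\to0$ implicitly uses that $v>0$ a.e.\ on $\partial\Omega_0$, so your instinct about where the difficulty sits is sound; but the Picone structure is what lets the paper dispense with the Hopf lemma, and your proof as written cannot be completed with the tools you invoke.)
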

\begin{proof}
Since $u$ is a positive eigenfunction corresponding to $\sigma_{\beta}(\Omega)$, it satisfies
\begin{equation}\label{testfuncu}
	\int_{\Omega} |\nabla u|^2 \;dx+\int_{\partial B_r}\beta(x)u^2 \,d\,\mathcal{H}^{n-1}=\sigma_{\beta}(\Omega)\int_{\partial\Omega_0}u^2 \;d\mathcal{H}^{n-1}.
\end{equation}
Let us use the following function $z=u^2/(v+\varepsilon)$, for $\epsilon >0$,
as a test function in the definition \ref{WS} of weak solution for $v$. 
We get
\begin{equation}\label{testfuncv}
    \int_{\Omega} \bigg[\frac{2u\langle \nabla u,\nabla v\rangle}{v+\varepsilon}-\frac{u^2|\nabla v|^2}{(v+\varepsilon)^2}\bigg]\,dx +\int_{\partial B_r}\frac{v}{v+\varepsilon}\beta(x)u^2 \,d\,\mathcal{H}^{n-1}=\sigma\int_{\partial\Omega_0}\frac{v}{v+\varepsilon}u^2 \;d\mathcal{H}^{n-1}.
\end{equation}
If we subtract \eqref{testfuncu} by \eqref{testfuncv}, since $v/(v+\varepsilon)<1$, we get
\begin{equation*}
\begin{split}
    0\le \int_{\Omega}\abs{ \nabla u -\frac{u\nabla v}{v+\varepsilon}}^2\,dx = \int_{\Omega} \bigg[|\nabla u|^2-&\frac{2u\langle \nabla u,\nabla v\rangle}{v+\varepsilon}+\frac{u^2|\nabla v|^2}{(v+\varepsilon)^2}\bigg]\,dx\\ &\le \int_{\partial\Omega_0}\bigg[\sigma_{\beta}(\Omega)-\sigma\frac{v}{v+\varepsilon}\bigg]u^2\;d\mathcal{H}^{n-1}.
\end{split}
\end{equation*}
Passing to the limit as $\varepsilon\to 0$, we get 
\begin{equation*}
    [\sigma_{\beta}(\Omega)-\sigma]\int_{\partial\Omega_0}u^2\,d\mathcal{H}^{n-1}\ge 0.
\end{equation*}
Since $\sigma_{\beta}(\Omega)$ is the smallest eigenvalue, the only possibility is that $\sigma=\sigma_{\beta}(\Omega)$ and by the simplicity of the first eigenvalue, it must be $v=u$ up to multiplicative constants.
\end{proof}
\section{The first Steklov-Robin eigenvalue in the spherical shell}
Let us consider now $A_{r,R}=B_R\setminus\overline{B_r}$, where $B_R$ and $B_r$ are balls centered at the origin with radii $R>r>0$. Let $\beta(x)=\beta>0$ be positive and constant on $\partial B_r$ and let us consider the Steklov-Robin eigenvalue problem for the Laplacian in the spherical shell
\begin{equation} \label{SRP}
\begin{cases}
\Delta u = 0 & \text{in}\; A_{r,R}\\
\frac{\partial u}{\partial \nu}+ \beta u = 0 & \text{on}\; \partial B_r \\
\frac{\partial u}{\partial \nu} = \sigma u & \text{on}\; \partial B_R,
\end{cases}
\end{equation}
where $\nu$ is the outer unit normal to $\partial A_{r,R}$.\\
We are going to compute the solutions to problem \eqref{SRP}. 
\begin{thm}
The first Steklov-Robin eigenvalue of the problem \eqref{SRP} is 
\begin{equation}\label{Sigmaspherical}
   \sigma_{\beta}(A_{r,R}) \begin{cases}
    \frac{\ds1}{\ds\frac{R}{\beta r}+R\log \frac{R}{r}} & n=2\\
    \frac{\ds n-2}{\displaystyle \frac{\ds n-2}{\beta}\bigg(\frac{R}{r}\bigg)^{n-2}+ R\bigg[\bigg(\frac{R}{r}\bigg)^{n-2}-1\bigg]} & n\ge 3,
    \end{cases}
\end{equation}
and the corresponding eigenfunctions are the following
\begin{equation}\label{Eigspherical}
   u(x)= \begin{cases}
    \ds \log \frac{\abs{x}}{r}+ \frac{1}{\beta r} & n=2\\
    \ds \frac{1}{r^{n-2}}-\frac{1}{\abs{x}^{n-2}}+\frac{n-2}{\beta}\frac{1}{Rr^{n-2}} & n\ge 3.
    \end{cases}
\end{equation}
\end{thm}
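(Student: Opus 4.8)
The plan is to exploit the full rotational symmetry of the spherical shell together with the facts established in Section 3: a nonnegative eigenfunction of \eqref{SRP} is necessarily the first one, it is strictly positive, and it is unique up to a multiplicative constant. Because of this, it suffices to exhibit a single radial, positive, harmonic function satisfying the two boundary conditions in \eqref{SRP}; its boundary Rayleigh quotient is then automatically $\sigma_\beta(A_{r,R})$, and simplicity takes care of uniqueness. This lets me avoid any symmetrization argument: I justify the restriction to radial functions a posteriori, since producing one positive radial solution is enough.

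First I would reduce the PDE to an ODE. Writing $\rho=|x|$ and looking for $u=u(\rho)$, harmonicity $\Delta u=0$ becomes $u''(\rho)+\frac{n-1}{\rho}\,u'(\rho)=0$ on the interval $(r,R)$. Integrating once gives $u'(\rho)=c\,\rho^{1-n}$, so the general radial harmonic function is $u(\rho)=A+B\log\rho$ when $n=2$ and $u(\rho)=A+B\,\rho^{2-n}$ when $n\ge 3$. The two regimes must be handled separately precisely because the fundamental solutions differ, which is why the final formulas \eqref{Sigmaspherical}--\eqref{Eigspherical} split at $n=2$.

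Next I would impose the boundary conditions, which for a radial $u$ collapse to pointwise relations at $\rho=r$ and $\rho=R$. Since $\nu$ is the outer normal to the annulus $A_{r,R}$, on the inner sphere $\partial B_r$ the normal points toward the origin, so $\partial_\nu u=-u'(r)$, whereas on $\partial B_R$ one has $\partial_\nu u=u'(R)$. Hence the Robin condition becomes $-u'(r)+\beta\,u(r)=0$ and the Steklov condition becomes $u'(R)=\sigma\,u(R)$. The first relation is an inhomogeneous linear equation in $A$ and $B$ that fixes their ratio; after a convenient normalization (taking $B=1$ for $n=2$ and $B=-1$ for $n\ge 3$) it produces exactly the eigenfunctions \eqref{Eigspherical}. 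Substituting this $u$ into the Steklov relation and solving $\sigma=u'(R)/u(R)$ then yields the closed form \eqref{Sigmaspherical}.

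Finally I would check that the constructed $u$ is strictly positive on $\overline{A_{r,R}}$, which holds because the additive constant $A$ is large enough to keep $u$ from changing sign on $[r,R]$; by the Section 3 results this identifies $u$ as the first Steklov-Robin eigenfunction and the computed $\sigma$ as $\sigma_\beta(A_{r,R})$. I expect no serious obstacle here: the one genuinely delicate point is the bookkeeping for the inward-pointing normal on $\partial B_r$, which flips a sign in the Robin relation, and the parallel but separate elementary algebra for $n=2$ and $n\ge 3$; everything else is routine ODE computation.
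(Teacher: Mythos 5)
Your proposal is correct and follows essentially the same route as the paper: reduce to the radial ODE $u''+\frac{n-1}{\rho}u'=0$, impose the Robin condition at $\rho=r$ (with the sign flip from the inward-pointing outer normal) and the Steklov condition at $\rho=R$, and then invoke the Section 3 result that a positive eigenfunction must be the first one. The only cosmetic difference is that the paper phrases the two boundary conditions as a homogeneous $2\times 2$ system and sets its determinant to zero, whereas you solve the Robin relation for the ratio of the constants first and then read off $\sigma=u'(R)/u(R)$; these are the same computation.
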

\begin{proof}
Since the radial symmetry of the problem and the rotational invariance of the Laplacian, we look forward to a solution which is of the type $u(x)=v(\abs{x})=v(s)$, where $s=\abs{x}$. Computing the Laplacian of $v$ we get
\begin{equation*}
v''+ \frac{n-1}{s}v'=0,
\end{equation*}
which is equivalent to 
\begin{equation*}
(s^{n-1}v')'=0.
\end{equation*}
So integrating twice we get 
\begin{equation} \label{funsol}
v(s)=\begin{cases}
c_1 \log s + c_2 & n=2 \\
\displaystyle \frac{c_1}{s^{n-2}}+c_2 & n\ge 3.
\end{cases}
\end{equation}
We are going to find the solution to \eqref{SRP} by using the boundary conditions on $B_r$ and $B_R$. 
Let us begin by the bidimensional case. Using the boundary condition we get the following system in the unknown variables $c_1$ and $c_2$
\begin{equation} \label{system}
\begin{cases}
-\frac{c_1}{r}+ \beta (c_1 \log r+c_2)=0\\
\frac{c_1}{R}-\sigma (c_1 \log R+c_2)=0.
\end{cases}
\end{equation}
Since this is a homogeneous system, the only way not to have $c_1=c_2 = 0$ is that
\begin{equation*}
\det \begin{pmatrix}
-1/r+\beta\log r & \beta \\
1/R-\sigma \log R &-\sigma
\end{pmatrix}= 0.
\end{equation*}
From this we get that
\begin{equation*}
\sigma_{\beta}(A_{r,R})= \frac{1}{\frac{R}{\beta r}+R\log \frac{R}{r}}.	
\end{equation*}
Since this choice of $\sigma$, $c_1$ and $c_2$ must be linearly dependents. Hence if we chose $c_1=1$, by using the second equation in \eqref{system} we have that
\begin{equation*}
c_2 = \frac{1}{\sigma R}-\log R= \frac{1}{\beta r}+ \log\frac{R}{r}-\log R= \frac{1}{\beta r}-\log r.
\end{equation*}
Hence inserting $c_1$ and $c_2$ in \eqref{funsol}, we have
\begin{equation*}
u(x)= \log \frac{\abs{x}}{r}+ \frac{1}{\beta r}.
\end{equation*}
In higher dimensions the system becomes
\begin{equation} \label{system2}
\begin{cases}
\frac{n-2}{r^{n-1}}c_1+ \beta (\frac{c_1}{r^{n-2}}+c_2 )=0\\
-\frac{n-2}{R^{n-1}}c_1- \sigma (\frac{c_1}{R^{n-2}}+c_2 )=0.
\end{cases}
\end{equation}
Proceding in the same way as before, we find that
\begin{equation*}
\sigma=\sigma_{\beta}(A_{r,R})= \frac{n-2}{\displaystyle \frac{n-2}{\beta}\bigg(\frac{R}{r}\bigg)^{n-2}+ R\bigg[\bigg(\frac{R}{r}\bigg)^{n-2}-1\bigg]}.
\end{equation*}
Hence chosing $c_1=-1$
\begin{equation*}
c_2 = \frac{1}{R^{n-2}}+\frac{n-2}{\sigma R^{n-1}}=\frac{1}{r^{n-2}}+ \frac{n-2}{\beta}\frac{1}{Rr^{n-2}},
\end{equation*}
and 
\begin{equation*}
u(x)= \frac{1}{r^{n-2}}-\frac{1}{\abs{x}^{n-2}}+\frac{n-2}{\beta}\frac{1}{Rr^{n-2}}.
\end{equation*}
Eventually, in any dimension, with these choices of the constants $c_1,c_2$, the corresponding eigenfunctions do not change sign and so they must be eigenfunctions corresponding to the first Steklov-Robin eigenvalue $\sigma_{\beta}(A_{r,R})$.
\end{proof}

By the explicit form of $\sigma_{\beta}(A_{r,R})$ and the corresponding eigenfunctions in \eqref{Sigmaspherical}-\eqref{Eigspherical}, we deduce the following properties
when we let vary $\beta$ or the radii of the spherical shell.
\begin{itemize}
    \item $\lim_{\beta \to 0} \sigma_{\beta}(A_{r,R}) =0$, and in particular
\begin{equation}\label{asym2}
    \lim_{\beta \to 0} \frac{\sigma_{\beta}(A_{r,R})}{\beta}=\frac{P(B_r)}{P(B_R)}.
\end{equation}
\item Recalling the explicit value of the first Steklov-Dirichlet eigenvalue of spherical shells (see \cite{verma2020eigenvalue, gavitone2021isoperimetric,paoli2020stability}), we have
\begin{equation}\label{asym3}
    \lim_{\beta \to \infty}\sigma_{\beta}(A_{r,R})=\sigma_D(A_{r,R}).
\end{equation}
\item Finally we have 
\begin{equation}\label{asym4}
    \lim_{r\to 0}\sigma_{\beta}(A_{r,R})=\lim_{R\to 0}\sigma_{\beta}(A_{r,R})=0,
\end{equation}
\end{itemize}

\vspace{0.5cm}
We will see in the next section that all of these behaviours will persist in the case of a generic $\Omega$.

\section{Asymptotic estimates of $\sigma_{\beta}(\Omega)$ with respect to $\beta$ and $r$}
In this section we will study the behaviour of $\sigma_{\beta}(\Omega)$ when $m$ (or $\beta$ constant) and $r$ vary.
\subsection{Behaviour with respect to the inner radius}
We will prove that \eqref{asym4} continues to hold for a general annular domain $\Omega$ by proving some suitable estimates in terms of the radius of the hole. Indeed let us consider the spherical shell $A_{r,R_m}$, where $R_m$ is defined in \eqref{re}, which is contained in $\Omega$. If we choose as a test function in the variational characterization of $\sigma_{\beta}(\Omega)$ 
\begin{equation*}
    \varphi = 
    \begin{cases}
    \ds v(|x|) & \text{in} \, A_{r,R_m}\\
    \ds v(R_m) & \text{in}\, \Omega \setminus \overline{A_{r,R_m}},
    \end{cases}
\end{equation*}
where $v$ is the first eigenfunction in $A_{r,R_m}$ with constant Robin parameter $\overline{\beta}=1/ m$, then 
\begin{equation*}
    \sigma_{\beta}(\Omega)\le \sigma_1(A_{r,R_m}).
\end{equation*}
As a consequence when $r\to 0$,
\begin{equation*}
    \sigma_{\beta}(\Omega) \to 0.
\end{equation*}
A natural question, now, is asking if we have a lower bound in terms of the first Steklov-Robin eigenvalue of an opportune spherical shell. In the following result we prove that for starshaped set is possible to have an optimal lower bound $\sigma_{\beta}(\Omega)$.
\begin{thm}\label{lowbound}
\label{lower}
Let $r>0$ and $\Omega_0\subset\R^n$ be an open, bounded starshaped set such that  $\overline{B_{r}}\subset\Omega_0$ and  let $\Omega=\Omega_0\setminus \overline{B_r}$.  Then, if $\tilde{\beta} = \inf_{x\in \partial B_r}\beta(x)$, it holds
\begin{equation}
\label{lb}
\sigma_{\beta}(\Omega)\ge \ds \frac{\ds \sigma_{\tilde{\beta}} \left(A_{r,R_m}\right)}{R_M^{n-1} \max_{\mathbb S^{n-1}} \left(\sqrt{1+ \frac{|\nabla_\tau \rho_0|^2}{\rho_0^2}}\right)}  ,
\end{equation}
where $R_m$ and $R_M$ are defined in \eqref{re}, $\rho_0$ is the radial function of $\Omega_0$ defined in \eqref
{ro} and $A_{r,R_m}$ is the spherical shell with radii $r$ and $R_m$.\\
Moreover, the equality case holds if and only if $\Omega_0$ is also a ball $B_R$ centered at the origin of radius $R>0$.

\end{thm}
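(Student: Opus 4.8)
The plan is to bound $\sigma_{\beta}(\Omega)$ from below by transplanting its first eigenfunction onto the inscribed spherical shell $A_{r,R_m}$ and then testing the Rayleigh quotient defining $\sigma_{\tilde{\beta}}(A_{r,R_m})$ with the transplanted function. Let $u$ be the positive first eigenfunction associated with $\sigma_{\beta}(\Omega)$, whose existence, positivity and simplicity are guaranteed by the results of Section \ref{secEX}. Since $\Omega_0$ is starshaped with respect to the origin, its boundary is described by the radial function $\rho_0$ as in \eqref{rad}, with $r<R_m\le \rho_0(\omega)\le R_M$ for every $\omega\in\mathbb S^{n-1}$. I would introduce the radial diffeomorphism $\Phi\colon \overline{A_{r,R_m}}\to\overline\Omega$ acting along rays, $\Phi(s\omega)=t(s,\omega)\,\omega$, where $t(\cdot,\omega)$ interpolates linearly between $t(r,\omega)=r$ and $t(R_m,\omega)=\rho_0(\omega)$. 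Thus $\Phi$ fixes the hole $\partial B_r$ pointwise and maps the outer sphere $\partial B_{R_m}$ onto $\partial\Omega_0$ along each ray; since $\rho_0$ is Lipschitz, $\Phi$ is bi-Lipschitz, so $w:=u\circ\Phi\in H^1(A_{r,R_m})$ is an admissible competitor and $\sigma_{\tilde{\beta}}(A_{r,R_m})$ is bounded above by the Rayleigh quotient of $w$.

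It then remains to compare the three ingredients of that quotient with those of $J[u]=\sigma_{\beta}(\Omega)$. On $\partial B_r$ the map is the identity, hence $w=u$ there; combined with $\tilde{\beta}=\inf_{\partial B_r}\beta\le \beta(x)$ this gives $\tilde{\beta}\int_{\partial B_r}w^2\,d\mathcal H^{n-1}\le \int_{\partial B_r}\beta(x)u^2\,d\mathcal H^{n-1}$. For the denominator, $w(R_m\omega)=u(\rho_0(\omega)\omega)$, so $\int_{\partial B_{R_m}}w^2\,d\mathcal H^{n-1}=R_m^{n-1}\int_{\mathbb S^{n-1}}u(\rho_0(\omega)\omega)^2\,d\mathcal H^{n-1}$, while the radial change-of-variables formula recalled in Section 2 yields $\int_{\partial\Omega_0}u^2\,d\mathcal H^{n-1}=\int_{\mathbb S^{n-1}}u(\rho_0\omega)^2\,\rho_0^{n-1}\sqrt{1+|\nabla_\tau\rho_0|^2/\rho_0^2}\,d\mathcal H^{n-1}$; bounding $\rho_0^{n-1}\le R_M^{n-1}$ and the Jacobian factor by its maximum over $\mathbb S^{n-1}$ produces precisely the constant $R_M^{n-1}\max_{\mathbb S^{n-1}}\sqrt{1+|\nabla_\tau\rho_0|^2/\rho_0^2}$ relating the two surface integrals. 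For the Dirichlet energy I would express $\nabla w$ through $(D\Phi)^T\nabla u$ and pass back to $\Omega$ via the Jacobian $\det D\Phi$, estimating the resulting integrand by the density of $\int_\Omega|\nabla u|^2$. Assembling the three estimates and dividing by $\int_{\partial\Omega_0}u^2\,d\mathcal H^{n-1}$ yields \eqref{lb}.

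I expect the Dirichlet energy comparison to be the main obstacle: the radial map $\Phi$ is not conformal, so it scales the radial and the tangential parts of $\nabla u$ by different factors (essentially $\partial_s t$ and $t/s$), and these must be controlled uniformly in terms of $r,R_m,R_M$ and $\nabla_\tau\rho_0$ together with $\det D\Phi$; this is where the geometry of $\Omega_0$ genuinely enters and where the bookkeeping is delicate. Finally, for the equality analysis I would trace the chain backwards: equality in \eqref{lb} forces equality in the pointwise bound $\rho_0^{n-1}\sqrt{1+|\nabla_\tau\rho_0|^2/\rho_0^2}\le R_M^{n-1}\max_{\mathbb S^{n-1}}\sqrt{1+|\nabla_\tau\rho_0|^2/\rho_0^2}$ for $\mathcal H^{n-1}$-a.e.\ $\omega$, which compels $\rho_0$ to be constant with $\nabla_\tau\rho_0\equiv 0$, i.e.\ $\Omega_0=B_R$ for some $R>0$. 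Conversely, when $\Omega_0=B_R$ the map $\Phi$ reduces to the identity on the shell $A_{r,R}=\Omega$ and every inequality above becomes an equality, so \eqref{lb} holds with equality if and only if $\Omega_0$ is a ball centered at the origin.
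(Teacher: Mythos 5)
The step you yourself flag as ``the main obstacle'' is exactly where your argument breaks down, and it is not mere bookkeeping: it cannot be closed in the form needed for \eqref{lb}. With your linear interpolation $t(s,\omega)=r+\frac{s-r}{R_m-r}\bigl(\rho_0(\omega)-r\bigr)$, the pulled-back gradient in spherical coordinates is $|\nabla w|^2=(\partial_{\tilde r}u)^2(\partial_s t)^2+\frac{1}{s^2}\bigl|\nabla_\tau u+\partial_{\tilde r}u\,\nabla_\tau t\bigr|^2$, so the Dirichlet energy of $w=u\circ\Phi$ picks up the radial stretch factor $\partial_s t=\frac{\rho_0(\omega)-r}{R_m-r}\ge 1$ (of order $\frac{R_M-r}{R_m-r}$, hence arbitrarily large, and not compensated near $\partial B_r$ where the Jacobian ratio $(s/\tilde r)^{n-1}$ tends to $1$), together with cross terms proportional to $|\nabla_\tau\rho_0|^2$ that now sit inside the volume integral rather than in a boundary measure. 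None of these factors can be absorbed into $R_M^{n-1}\max_{\mathbb S^{n-1}}\bigl(\sqrt{1+|\nabla_\tau\rho_0|^2/\rho_0^2}\bigr)$, so an honest completion of your outline yields a strictly worse constant than \eqref{lb}, and the equality case (the ball must give equality) is then lost. There is also a smaller internal inconsistency: your own formula $\int_{\partial B_{R_m}}w^2\,d\mathcal H^{n-1}=R_m^{n-1}\int_{\mathbb S^{n-1}}u(\rho_0\omega)^2\,d\mathcal H^{n-1}$ shows that the constant relating $\int_{\partial\Omega_0}u^2\,d\mathcal H^{n-1}$ to $\int_{\partial B_{R_m}}w^2\,d\mathcal H^{n-1}$ is $R_M^{n-1}\max_{\mathbb S^{n-1}}\bigl(\sqrt{1+|\nabla_\tau\rho_0|^2/\rho_0^2}\bigr)/R_m^{n-1}$, not the constant you assert, so \eqref{lb} does not even come out of your boundary comparison.

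The paper's proof avoids transplantation altogether, and that is precisely what keeps the constant clean. Since $\Omega_0$ is starshaped, $\Omega$ is written in spherical coordinates with $r\le\tilde r\le\rho_0(x)$ and $\rho_0\ge R_m$, so the Dirichlet energy of $u$ over $\Omega$ dominates the energy of the \emph{restriction} of $u$ to the inscribed shell $A_{r,R_m}$ by plain monotonicity of the integral in the radial variable (inequality \eqref{num}); no Jacobian or anisotropy factor ever touches the gradient term. The geometry enters only through the boundary integrals: $\int_{\partial B_r}\beta(x)u^2\,d\mathcal H^{n-1}\ge\tilde\beta\int_{\partial B_r}u^2\,d\mathcal H^{n-1}$ on the hole, and the outer integral is controlled via the radial parametrization as in \eqref{den}. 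The restricted function is then used as a competitor in the variational characterization of $\sigma_{\tilde\beta}(A_{r,R_m})$, and the equality discussion amounts to observing that all inequalities in the chain must be equalities, forcing $\nabla_\tau\rho_0\equiv 0$, i.e.\ $\rho_0$ constant. If you insist on using the map $\Phi$ (which does have the merit of matching the outer traces exactly, a point the paper glosses over), you must accept a constant degrading with $\frac{R_M-r}{R_m-r}$ and with $\sup_{\mathbb S^{n-1}}|\nabla_\tau\rho_0|$; restriction of the eigenfunction, not transplantation of it, is the idea your proposal is missing.
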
 
\begin{proof}
 We will  follow an idea used in \cite{kuttler1968lower} for the planar case  and in \cite{garcia2015lower, verma2018bounds} for any dimension.
Let $u\in H^1(\Omega)$
By using spherical  coordinates and the notation introduced in Section $2$:
\[
\partial \Omega_0 = \{x\,\rho_0(x),  x \in \mathbb S^{n-1}\}
\]
and
\begin{equation*}
    \partial B_r = \{xr, \, x \in \mathbb{S}^{n-1}\}.
\end{equation*}
The integrals over the boundaries $\partial \Omega_0$ and $\partial B_R$ of $u^2$ become
\begin{equation*}
\ds\int_{\partial\Omega_0}u^2\;d\mathcal{H}^{n-1}= \ds \int_{\mathbb S^{n-1}} u^2 \,\,\sqrt{1+\left(\frac{|\nabla_\tau \rho_0|}{\rho_0}\right)^2}\,(\rho_0)^{n-1}\,d\mathcal H^{n-1},
\end{equation*}
\begin{equation} \label{numball}
    \int_{\partial B_r}\beta(x)u^2 \,d\mathcal{H}^{n-1}\ge \tilde{\beta}r^{n-1}\int_{\mathbb{S}^{n-1}} u^2\,\mathcal{H}^{n-1}.
\end{equation}
In particular we have
\begin{equation}
\label{den}
 \ds\int_{\partial\Omega_0}u^2\;d\mathcal{H}^{n-1} \le (R_M)^{n-1} \max_{\mathbb S^{n-1}} \left(\sqrt{1+ \frac{|\nabla_\tau \rho_0|^2}{\rho_0^2}}\right) \int_{\mathbb S^{n-1}} u^2 \,\,d\mathcal H^{n-1}.
\end{equation}
We can parametrize
\[
\Omega=\{s\in \mathbb R^n \colon s=x\,\tilde{r} ,\, x \in \mathbb S^{n-1},\, \tilde{r}\le \tilde{r}\le \rho_0(x)  \} ,
\]
by using spherical coordinates, where we denote by $R(y)=\rho_0(x(y))$, and  $x\colon y\in U\subset \mathbb R^{n-1}\to x(y) \in \mathbb S^{n-1}$ is a standard parametrization of the boundary of the unit ball in $\mathbb R^{n}$. Then we get
\begin{equation}
\label{den1}
\ds\int _{\Omega}|\nabla u|^2\;ds=\displaystyle \int_{U} \int_{r}^{R(y)}\left\{\left(\frac{\partial u}{\partial \tilde{r}}\right)^2+\frac{1}{\tilde{r}^2}|\nabla_{\tau} u|^2\right\}\tilde{r}^{n-1}\sqrt{\tilde g}\, d\tilde{r}\,dy,\\
\end{equation}
where  $\sqrt {\tilde g}$ is the determinant of the matrix $\tilde g_{ij}$, that is the standard  metric on $\mathbb S^{n-1}$ and $\nabla_{\tau} u$ is the component of $\nabla u$ tangential to $\mathbb S^{n-1}$. Therefore
\begin{equation}
\label{num}
\displaystyle 
\ds\int _{\Omega}|\nabla u|^2\;ds\ge\int_{U} \int_{r}^{R_m}\left\{\left(\frac{\partial u}{\partial \tilde{r}}\right)^2+\frac{1}{\tilde{r}^2}|\nabla_{\tau} u|^2\right\}\tilde{r}^{n-1}\sqrt{\tilde g}\, d\tilde{r}\,dy,
\end{equation}
Combining \eqref{den}, \eqref{numball} and \eqref{num} and recalling \eqref{varcar},  we get
\begin{multline}
\sigma_{\beta}(\Omega)\ge \ds \frac{\displaystyle \int_{U} \int_{r}^{R_m}\left\{\left(\frac{\partial u}{\partial \tilde{r}}\right)^2+\frac{1}{\tilde{r}^2}|\nabla_{\tau} u|^2\right\}\tilde{r}^{n-1}\sqrt{\tilde g}\, d\tilde{r}\,dy+ \tilde{\beta}r^{n-1}\int_{\mathbb{S}^{n-1}}u^2\,d\mathcal{
H}^{n-1}
}{\ds(R_M)^{n-1} \max_{\mathbb S^{n-1}} \left(\sqrt{1+ \frac{|\nabla_\tau \rho_0|^2}{\rho_0^2}}\right) \ds \int_{\mathbb S^{n-1}} u^2 \,\,d\mathcal H^{n-1}}
\ge\\ \frac{\sigma_{\tilde{\beta}}(A_{r,R_m})}{\ds(R_M)^{n-1} \max_{\mathbb S^{n-1}} \left(\sqrt{1+ \frac{|\nabla_\tau \rho_0|^2}{\rho_0^2}}\right)}.
\end{multline}   

 Finally, we stress that  the equality case implies that  all the inequalities become equalities. So, we have that  $\nabla_{\tau}\rho_0=0$ and  $\rho_0(x)=\overline{R}$, with $\overline{R}>R$ constant.
 \end{proof}

Theorem \ref{lowbound} tells us that as long $\Omega_0$ is an open, bounded starshaped set, and $r>0$, then $\sigma_{\beta}(\Omega)$ remains away from zero. Is this property still true for any open, bounded set in $\mathbb{R}^n$ with Lipschitz boundary? In general the answer is no, as showed in the following bidimensional counterexample, that is contained in \cite{girouard2017spectral}, that can be easily generalized in any dimension.
\begin{cntex}
Let $\beta(x)=\beta$ be a positive constant and let us consider a sequence of open, bounded and connected sets $\{\Omega_\varepsilon\}\subset \mathbb{R}^2 $ as follows
\begin{equation*}
	\Omega_\varepsilon = B(x_0)\cup R_{\varepsilon}\cup (B(x_1)\setminus \overline{B_r(x_1)}).
\end{equation*}
Here 
\begin{equation*}
R_{\varepsilon}= \bigg(-\frac{\varepsilon}{2},\frac{\varepsilon}{2}\bigg)\times \bigg(-\frac{\varepsilon^3}{2},\frac{\varepsilon^3}{2}\bigg)
\end{equation*}
is a rectangle centered at the origin with sides of length $\varepsilon$ and $\varepsilon^3$ respectively, $B(x_1)$, $B(x_2)$ are two dimensional balls of radius $1$ centered at the points $x_1$ and $x_2$, chosen such that the rectangle $R_{\varepsilon}$ is well glued and eventually $B_r(x_2)$ is a concentric ball in $B(x_2)$ of radius $0<r<1$.\\

Let us consider the following function
\begin{equation*}
	u(x,y)= \begin{cases}
	\sin \bigg(\ds\frac{2\pi x}{\varepsilon}\bigg) & \text{in} \, R_{\varepsilon}\\
	0 & \text{elsewhere},
	\end{cases}
\end{equation*}
which is a continuous test function for the first Steklov-Robin eigenvalue.\\
Let us evaluate $u$ in the numerator and denominator the Rayleigh quotient. The denominator becomes
\begin{equation*}
\begin{split}
\int_{\partial \Omega_\varepsilon}u^2 \,d\mathcal{H}^{n-1}&= \int_{\partial R_\varepsilon}u^2\,d\mathcal{H}^{n-1} = 2\int_{-\frac{\varepsilon}{2}}^{\frac{\varepsilon}{2}}\sin^2 \bigg(\ds\frac{2\pi x}{\varepsilon}\bigg)\,dx\\
&=4\int_0^{\frac{\varepsilon}{2}} \sin^2\bigg(\ds\frac{4\pi x}{\varepsilon}\bigg)\,dx
=4\int_0^{\frac{\varepsilon}{2}}\frac{1-\cos\bigg(\ds\frac{4\pi x}{\varepsilon}\bigg)}{2}\,dx=\varepsilon
\end{split}
\end{equation*}
Since
\begin{equation*}
	|\nabla u|^2 = \bigg(\frac{\partial u}{\partial x}\bigg)^2 = \bigg(\frac{2\pi}{\varepsilon}\bigg)^2 \cos^2\bigg(\ds\frac{2\pi x}{\varepsilon}\bigg),
\end{equation*}
we have that the numerator is
\begin{equation*}
\begin{split}
\int_{\Omega_\varepsilon}|\nabla u|^2 \,dx\,dy&= \int_{R_\varepsilon}|\nabla u|^2\,dx\,dy = \bigg(\frac{2\pi }{\varepsilon}\bigg)^2\int_{-\frac{\varepsilon^3}{2}}^{\frac{\varepsilon^3}{2}}\int_{-\frac{\varepsilon}{2}}^{\frac{\varepsilon}{2}} \cos^2\bigg(\ds\frac{4\pi x}{\varepsilon}\bigg)\,dx\,dy \\
&= 2\bigg(\frac{2\pi }{\varepsilon}\bigg)^2\varepsilon^3\int_0^{\frac{\varepsilon}{2}}\frac{1+\cos\bigg(\ds\frac{4\pi x}{\varepsilon}\bigg)}{2}\,dx=2\pi^2\varepsilon^2.
\end{split}
\end{equation*}
In this way, since $u$ is zero on $\partial B_r(x_2)$, we get
\begin{equation*}
	\sigma_{\beta}(\Omega_\varepsilon)\le \frac{\ds \int_{\Omega_\varepsilon}|\nabla u|^2\,dx}{\ds \int_{\partial \Omega_\varepsilon}u^2 \,d\mathcal{H}^{n-1}}= 2\pi^2\varepsilon,
\end{equation*}
and
\begin{equation*}
	\sigma_{\beta}(\Omega_\varepsilon)\to 0
\end{equation*}
as $\varepsilon\to 0$. We stress the fact that the same proof can be exhibited even in the Steklov-Dirichlet case.
\end{cntex}

This counterexample gives us two information. The first is the one we already mentioned: if $\Omega_0$ is not starshaped, the first eigenvalue could be arbitrarily close to zero. The second one is that when $\Omega$ is not connected, then $\sigma_{\beta}(\Omega)$ could be zero, even though $r>0$. 

\subsection{Behaviour with respect to $m$ and $\beta$}
In this section we will give the proof of the Theorem \ref{mainstime}. 
\begin{proof}[Proof of Theorem \ref{mainstime}]
Firstly we prove inequality \eqref{Kutt_mi}. We observe that  the claim is well posed since,  by proceeding analogously as in the existence theorem, $\mu_1(\Omega)$  is positive. 
For any $w \in H^{1}(\Omega)$, for simplicity, we will use the following notation
\begin{equation}
\label{D}
D(w):=\displaystyle \int_{\Omega}|\nabla w|^2\,dx.
\end{equation}
Let $u$ be a positive eigenfunction corresponding to  $\sigma_{\beta}(\Omega)$. By the Minkowski   inequality and the definition of $\mu_1(\Omega)$ we have
\[
\sqrt{\int_{\partial\Omega_0}u^2\,d\mathcal H^{n-1}}\leq \sqrt{\int_{\partial\Omega_0}(u-c)^2 \,d\mathcal H^{n-1}}+\sqrt{c^2 P(\Omega_0)}\leq \sqrt{\frac{D(u)}{\mu_1(\Omega)}}+\sqrt{c^2 P(\Omega_0)},
\]
where $c$ is
\begin{equation}\label{c}
c=\dfrac{1}{m} \int_{\partial B_r}\beta(x) u \,d \mathcal{H}^{n-1},
\end{equation}
Squaring and using the arithmetic-geometric mean inequality, we have
\begin{equation}\label{A}
\begin{split}
\int_{\partial\Omega_0}u^2d\mathcal{H}^{n-1} &\leq \frac{D(u)}{\mu_1(\Omega)}+c^2 P(\Omega_0)+2\sqrt{\frac{D(u)c^2P(\Omega_0)}{\mu_1(\Omega)}}\\
&= D(u)\left(\frac{1}{\mu_1(\Omega)}+\frac{P(\Omega_0)}{m}\right)+c^2m\left(\frac{1}{\mu_1(\Omega)}+\frac{P(\Omega_0)}{m}\right)\\
&=\left(\frac{1}{\mu_1(\Omega)}+\frac{P(\Omega_0)}{m}\right)(D(u)+c^2 m).
\end{split}
\end{equation}
Applying H\"{o}lder inequality in \eqref{c} we have
\begin{equation*}
    c^2 m = \frac{1}{m}\bigg( \int_{\partial B_r}\beta(x)u\,d\mathcal{H}^{n-1}\bigg)^2\le \int_{\partial B_r}\beta(x)u^2\,d\mathcal{H}^{n-1},
\end{equation*}
so that
\begin{equation}
\begin{split}
\int_{\partial\Omega_0}u^2d\mathcal{H}^{n-1} 
&\le  \left(\frac{1}{\mu_1(\Omega)}+\frac{P(\Omega_0)}{m}\right) \left(D(u)+  \int_{\partial B_r} \beta (x)u^2 \,d\mathcal{H}^{n-1}\right)\\
&= \left(\frac{1}{\mu_1(\Omega)}+\frac{P(\Omega_0)}{m}\right)\left(\sigma_{\beta}(\Omega)\int_{\partial \Omega_0}u^2 d\mathcal{H}^{n-1}\right),
\end{split}
\end{equation}
which gives \eqref{kutmu}.\\
Now we prove inequality \eqref{Kutt_40i}. Let us stress the fact that the quantity $q(\Omega)$ defined in \eqref{qomega} is strictly positive. Indeed if $q(\Omega)=0$, then it would follow that $w=0$ a.e. on $\partial B_r$ and since $\partial w\backslash \partial \nu=0$ on $\partial \Omega_0$, then it would imply $w\equiv 0$ in $\Omega$, which is a contradiction. Let $u$ be the eigenfunction corresponding to $\sigma_{\beta}(\Omega)$, solution to problem \eqref{proSR}. Let us observe that $u=v+h$, where $v$ and $h$ solve the following problems
\begin{equation*}
    \begin{cases}
    \Delta v = 0 & \text{in}\,\Omega\\
    v=0 & \text{on}\,\partial B_r\\
    \frac{\de v}{\de \nu}=\frac{\de u}{\de \nu} & \text{on}\,\partial \Omega_0,
    \end{cases}
    \qquad \qquad
    \begin{cases}
    \Delta h = 0 & \text{in}\,\Omega\\
    h=u & \text{on}\,\partial B_r\\
    \frac{\de h}{\de \nu}=0 & \text{on}\,\partial \Omega_0.
    \end{cases}
\end{equation*}
It is easy to check that
\begin{equation}\label{gradientvh}
    \int_{\Omega}\abs{\nabla u}^2\,dx = \int_{\Omega}\abs{\nabla v}^2\,dx+\int_{\Omega}\abs{\nabla h}^2\,dx.
\end{equation}
Then, proceeding as the proof of inequality \eqref{kutmu},  
by applying Minkowski inequality and using  \eqref{gradientvh}, \eqref{testfuncv} and \eqref{q1}, we get
\begin{equation*}
    \begin{split}
\sqrt{\ds\int_{\partial\Omega_0}u^2\,\mathcal{H}^{n-1}}&\le \sqrt{\ds\int_{\partial\Omega_0}v^2\,\mathcal{H}^{n-1}}+\sqrt{\ds\int_{\partial\Omega_0}h^2\,\mathcal{H}^{n-1}}\\
    & \le \sqrt{\frac{1}{\sigma_D(\Omega)}\ds D(u)}+\sqrt{\frac{1}{q_\beta(\Omega)}\ds\int_{\partial B_r}\beta(x)u^2\,\mathcal{H}^{n-1}}.
\end{split}
\end{equation*}
Squaring both sides we have
\begin{equation*}
    \begin{split}
    \int_{\partial \Omega_0}u^2\,d\mathcal{H}^{n-1}\le \frac{D(u)}{\sigma_D(\Omega)}&+ \frac{1}{q_\beta(\Omega)}\int_{\partial B_r}\beta(x)u^2\,\mathcal{H}^{n-1}\\
    &+2\sqrt{\frac{D(u)}{\sigma_D(\Omega)}\frac{1}{q_\beta(\Omega)}\int_{\partial B_r}\beta(x)u^2\,\mathcal{H}^{n-1}}
    \end{split}
\end{equation*}
Applying the arithmetic-geometric mean inequality and H\"{o}lder inequality we get
\begin{equation*}
    \begin{split}
     \int_{\partial \Omega_0}u^2\,d\mathcal{H}^{n-1} &\le \frac{D(u)}{\sigma_D(\Omega)}+ \frac{D(u)}{q_\beta(\Omega)}\\
     &+\frac{1}{q_\beta(\Omega)}\int_{\partial B_r}\beta(x)u^2\,\mathcal{H}^{n-1}
     + \frac{1}{\sigma_D(\Omega)}\int_{\partial B_r}\beta(x)u^2\,\mathcal{H}^{n-1}\\
     &= \bigg(\frac{1}{\sigma_D(\Omega)}+\frac{1}{ q_\beta(\Omega)}\bigg)\bigg(D(u)+\int_{\partial B_r}\beta(x)u^2\,\mathcal{H}^{n-1}\bigg)
     \end{split}
\end{equation*}
This gives \eqref{Kutt_40i}.

\end{proof}

\begin{rem}
We stress that a rough but meaningful estimate can be obtained 
choosing as a test function in \eqref{varcar} the constant function. In this case we have the following  upper bound
\begin{equation}\label{roughestimate}
	\sigma_{\beta}(\Omega)\le \frac{m}{P(\Omega_0)},
\end{equation}
which immediately gives that  when $m \to 0$, then $\sigma_{\beta}(\Omega) \to 0$.
However, inequality \eqref{kutmu} allows us to show that it holds 
\begin{equation}
\label{asi}
\lim_{m \rightarrow0} \frac{\sigma_{\beta}(\Omega)}{m}=\frac{1}{P(\Omega_0)}.
\end{equation}
Indeed  we have
\begin{equation}
\label{s1}
\frac{1}{P(\Omega_0)} \ge \frac{\sigma_{\beta}(\Omega)}{m}\ge \dfrac{\mu_1(\Omega)}{m + P(\Omega_0)\mu_1(\Omega)},
\end{equation}
where the first inequality follows by \eqref{roughestimate} and the second by using \eqref{Kutt_mi}. 
Taking in  \eqref{s1} the limit for $m$ which goes to zero  one gets \eqref{asi}.\\
In particular, if $\beta(x)=\beta$ is the constant function we recover the asymptotic formula valid in the radial case
$$
\lim_{\beta \to 0} \frac{\sigma_{\beta}(\Omega)}{\beta}= \frac{P(B_r)}{P(\Omega_0)}.
$$
\end{rem}


\begin{rem}
We observe that if $\beta(x)=\beta$ is constant, then inequality \eqref{Kutt_40i} gives 
\[
\frac{1}{\sigma_{\beta}(\Omega)}-\frac{1}{\sigma_D(\Omega)} \le \frac{1}{ \beta q(\Omega)}
\]
which immediately  implies that
\begin{equation*}
    \lim_{\beta \to \infty} \sigma_{\beta}(\Omega)= \sigma_D(\Omega).
\end{equation*}
\end{rem}

\section{Asymptotic behaviour of the eigenfunctions}
In the radial case it is easy to check that the eigenfunctions corresponding to $\sigma_\beta (\Omega)$ converge pointwise to the corresponding eigenfunction of the Steklov-Dirichlet eigenvalue. In this section we will prove that the same happens even for a generic $\Omega$ when $\beta$ is a positive constant.
\begin{proof}[Proof of Theorem \ref{mainstime}]
Without loss of generality, we suppose that $u_\beta$ and $v$ are normalized in $L^2(\partial \Omega_0)$ with unitary norm.
By their weak formulations we get
\begin{equation} \label{WSR}
\int_{\Omega} \langle\nabla u,\nabla \varphi\rangle \;dx+\beta\int_{\partial B_r}u\varphi \,d\,\mathcal{H}^{n-1}=\sigma_\beta(\Omega)\int_{\partial\Omega_0}u \varphi \;d\mathcal{H}^{n-1}, \qquad \forall \varphi \in H^1(\Omega),
\end{equation}
and
\begin{equation} \label{WSD}
\int_{\Omega} \langle\nabla v,\nabla \varphi\rangle \;dx=\sigma_D(\Omega)\int_{\partial\Omega_0}v\varphi \;d\mathcal{H}^{n-1}, \qquad \forall \varphi \in H^1_{\partial B_r}(\Omega).
\end{equation}
In particular $v$ is an admissible function for \eqref{WSR}, hence
\begin{equation}\label{vinubeta}
    \int_{\Omega} \langle\nabla u,\nabla v\rangle \;dx=\sigma_\beta(\Omega)\int_{\partial\Omega_0}u v \;d\mathcal{H}^{n-1}.
\end{equation}
Now, by Friederich's inequality \eqref{friedin}, there exists a positive constant $C>0$ such that
\begin{equation} \label{L2estimate}
    \|u_\beta - v\|_{L^2(\Omega)}\le C\bigg(\|\nabla u_\beta -\nabla v\|_{L^2(\Omega)}+\|u_\beta -v\|_{L^2(\partial \Omega)}\bigg).
\end{equation}
Let us now estimate the terms inside the round brackets. Using the variational characterization of $u_\beta$ and $v$, the fact that $\|u_\beta\|_{L^2(\partial \Omega_0)}=\|v\|_{L^2(\partial \Omega_0)}=1$ and equation \eqref{vinubeta} we get
\begin{equation*}
\begin{split}
    \|\nabla u_\beta &-\nabla v\|_{L^2(\Omega)}+\|u_\beta -v\|_{L^2(\partial \Omega)}= \\
    &=  \int_{\Omega}|\nabla u_\beta|^2\,dx -2\int_{\Omega}\langle\nabla u_\beta,\nabla v \rangle\,dx + \int_{\Omega}|\nabla v|^2\,dx+\\
    &+\int_{\partial\Omega_0}|u_\beta-v|^2\,d\mathcal{H}^{n-1}+\int_{\partial B_r}u_\beta^2\,d\mathcal{H}^{n-1} \\
    &=(1-\beta)\int_{\partial B_r}u^2_\beta\,d\mathcal{H}^{n-1} +\int_{\partial\Omega_0}|u_\beta-v|^2\,d\mathcal{H}^{n-1}\\
    &+\sigma_\beta(\Omega)-2\sigma_\beta(\Omega)\int_{\partial \Omega_0}uv\,d\mathcal{H}^{n-1}+\sigma_D(\Omega).
\end{split}
\end{equation*}
Since we want $\beta \to +\infty$, we can chose $\beta>1$. Therefore
\begin{equation*}
\begin{split}
    \|\nabla u_\beta &-\nabla v\|_{L^2(\Omega)}+\|u_\beta -v\|_{L^2(\partial \Omega)}<\\
    &<\int_{\partial\Omega_0}|u_\beta-v|^2\,d\mathcal{H}^{n-1}+\sigma_\beta(\Omega)-2\sigma_\beta(\Omega)\int_{\partial \Omega_0}uv\,d\mathcal{H}^{n-1}+\sigma_D(\Omega).
    \end{split}
\end{equation*}
Since in the previous section we proved that $\sigma_\beta(\Omega)\to \sigma_D(\Omega)$ as $\beta \to +\infty$, if we prove that $\|u_\beta-v\|_{L^2(\partial \Omega_0)}\to 0$ we have \eqref{H1conv}.\\
Let $\{\beta_k\}_{k\in \mathbb{N}}$ a sequence of Robin parameters such that $\beta_k>1$ for every $k\ge 1$ and such that $\beta_k\to +\infty$. Let $u_k\equiv u_{\beta_k}$ be the eigenfunctions corresponding to the eigenvalues $\sigma_{\beta_k}(\Omega)$, such that $\|u_k\|_{L^2(\partial \Omega_0)}=1$. We already know that $\sigma_{\beta_k}(\Omega)\to \sigma_D(\Omega)$ as $k\to +\infty$.\\
Applying once more Friederich's inequality \eqref{friedin}, we get
\begin{equation*}
    \begin{split}
        \|u_k\|_{H^1(\Omega)} &= \|\nabla u_k\|_{L^2(\Omega)} + \|u\|_{L^2(\Omega)}\\
        &\le \|\nabla u_k\|_{L^2(\Omega)} + C(\|\nabla u_k\|_{L^2(\Omega)}+\|u\|_{L^2(\partial\Omega)})\\
        &\le \|\nabla u_k\|_{L^2(\Omega)} + C(\|\nabla u_k\|_{L^2(\Omega)}+\beta_k\|u\|_{L^2(\partial B_r)}+1)\\
        &\le \sigma_D(\Omega) + C(\sigma_D(\Omega)+1)= (C+1)\sigma_D(\Omega)+ C.
    \end{split}
\end{equation*}
Hence we have that the sequence $\{u_k\}_{k\in\mathbb{N}}$ is uniformly bounded in $H^1(\Omega)$ with respect to $\beta_k$. So there exists a function $\Bar{u}\in H^1(\Omega)$ such that $\nabla u_k \to\nabla \Bar{u}$ weakly in $L^2(\Omega)$ and by the compactness of the trace operator we have that $u_k\to \Bar{u}$ in $L^2(\partial \Omega)$ and hence $u_k\to \Bar{u}$ a.e. on $\partial \Omega$. In particular, since for every $k\in\mathbb{N}$ 
\begin{equation*}
    \beta_k \|u_k\|_{L^2(\partial B_r)}\le \sigma_D(\Omega)<+\infty,
\end{equation*}
it is necessary that $\|u_k\|_{L^2(\partial B_r)}\to 0$ as $k \to +\infty$. Using Fatou's Lemma we have that for $k\to +\infty$
\begin{equation*}
    0\le \int_{\partial B_r} \Bar{u}^2\,d\mathcal{H}^{n-1}\le \liminf_{k\to +\infty}\int_{\partial B_r}u_k^2\,d\mathcal{H}^{n-1}\to 0.
\end{equation*}
Therefore $\|\Bar{u}\|_{L^2(\partial \Omega)}=0$, and hence $\Bar{u}\in H^1_{\partial B_r}(\Omega)$. We only need to show that $\Bar{u}=v$. If we use $\Bar{u}$ as a test function in \eqref{WSR} we have that
\begin{equation*}
    \int_{\Omega} \langle\nabla u_k,\nabla \Bar{u}\rangle \;dx=\sigma_\beta(\Omega)\int_{\partial\Omega_0}u_k \Bar{u} \;d\mathcal{H}^{n-1}.
\end{equation*}
Now by the weak convergence of the gradients, the weak convergence of $u_k$ on $L^2(\partial \Omega_0)$ and since $\sigma_{\beta_k}\to \sigma_D$, we get 
\begin{equation*}
    \int_{\Omega}|\nabla \Bar{u}|^2\,dx = \sigma_D(\Omega) \int_{\partial \Omega_0}\Bar{u}^2\,d\mathcal{H}^{n-1}.
\end{equation*}
By the simplicity of $\sigma_D$ and the fact that $\|v\|_{L^2(\partial \Omega_0)}=\|\Bar{u}\|_{L^2(\partial \Omega_0)}=1$, it follows that $\Bar{u}=v$. This implies that $\|u_{\beta_k}-v\|_{L^2(\partial\Omega_0)}\to 0$ and so
\begin{equation*}
    \|u_\beta - v\|_{L^2(\Omega)}\to 0.
\end{equation*}
In particular the following
\begin{equation*}
\begin{split}
    0&\le \|u_\beta - v\|_{L^2(\Omega)}\le C(\|\nabla u_\beta - \nabla v\|_{L^2(\Omega)}+\|u_\beta - v\|_{L^2(\partial\Omega)})\\
    &\le C\bigg(\sigma_\beta(\Omega)-2\sigma_\beta(\Omega)\int_{\partial \Omega_0}uv\,d\mathcal{H}^{n-1}+\sigma_D(\Omega)\bigg)\to 0
\end{split}
\end{equation*}
implies that $\|\nabla u_\beta -\nabla v\|_{L^2(\Omega)}\to 0$. This conclude the proof.
\end{proof}

\section*{Acknowledgements}
This work has been partially supported by the MiUR-PRIN 2017 grant \lq\lq Qualitative and quantitative aspects of nonlinear PDEs\rq\rq, by GNAMPA of INdAM and by FRA 2020 \lq\lq Optimization problems in Geometric-functional inequalities and nonlinear PDEs\rq\rq(OPtImIzE).
\bibliographystyle{plain}
\bibliography{biblio}

\end{document}